\documentclass[11pt]{article}
\usepackage[margin=1in]{geometry}
\usepackage{amsmath,amssymb,amsthm,booktabs,array,graphicx,microtype}
\usepackage[T1]{fontenc}
\usepackage{lmodern}
\usepackage[colorlinks=true,allcolors=blue]{hyperref}
\usepackage[round,authoryear]{natbib}
\newtheorem{theorem}{Theorem}
\newtheorem{proposition}[theorem]{Proposition}
\newtheorem{corollary}[theorem]{Corollary}

\newcommand{\E}{\mathbb E}
\newcommand{\Pp}{\mathbb P}
\newcommand{\ind}{\mathbf 1}
\newcommand{\KL}{\operatorname{KL}}
\newcommand{\Q}{\mathcal Q}
\newcommand{\F}{\mathcal F}
\newcommand{\G}{\mathcal G}
\newcommand{\pos}[1]{\left(#1\right)_+}

\begin{document}

\title{Multiscale Sequential Auditing of Quantile Regression with Point Masses}
\author{Asad Ullah\\[2pt]
\small University of North Carolina at Charlotte\\
\small\texttt{aullah@uncc.edu}}
\date{}
\maketitle

\begin{abstract}
For discrete outcomes, a valid conditional quantile can have a hit probability different from its nominal level. Errors in different covariate groups can also cancel when residual signs are pooled. We study sequential audits that address both problems by testing the two inequalities that define quantile-set membership. The audit records whether each response falls below, equals, or exceeds its forecast. We derive the information projection for this three-outcome experiment and use continuous betting mixtures within the cells of a covariate tree. Context-tree weighting combines the resulting audits over all prunings, with an explicit finite-path oracle bound. Under iid alternatives with uniformly positive sign probabilities, a countable mixture of refining trees attains the full conditional sign-information rate. A matching upper bound shows that no valid evidence sequence based on the same observations can attain a larger almost-sure exponential rate. We also bound the information lost through covariate coarsening, derive a recurrence for the local mixtures, and discuss quantile selection and confidence-sequence inversion. Simulations show how partition resolution affects detection, and a held-out count-data example illustrates the difference between quantile validity and exact-hit calibration.
\end{abstract}

\noindent\textbf{MSC2020:}
Primary 62L10, 62J02; secondary 62G10.\\
\textbf{Keywords:}
Conditional calibration; context-tree weighting; discrete outcomes;
e-process; quantile regression; sequential inference.

\section{Introduction}

Quantile regression describes how different parts of a response distribution vary with covariates. Since the check-loss formulation of \citet{koenker}, it has become a standard way to study conditional location, spread, and tail behavior. Auditing a fitted quantile forecast, however, requires care when the response distribution has point masses. For continuous distributions without an atom at the forecast, quantile validity gives the familiar nominal hit probability. For count data, rounded measurements, or outcomes with structural zeros, that equality need not hold. Consider a distribution assigning probabilities $(0.25,0.50,0.25)$ to $(-1,0,1)$. Its unique median is zero, yet $\Pp(Y\leq0)=0.75$.

A monitoring procedure based on exact-hit equality can therefore reject a correct discrete-outcome forecast. Pooling observations creates another difficulty: a forecast may be too low in one group and too high in another, with the errors cancelling in the aggregate. We address these problems by auditing the defining quantile inequalities conditionally on the information available before each response. The resulting procedure allows the evidence to be inspected repeatedly and the audit to stop when a threshold is crossed.

The main construction combines audits across the prunings of a fixed covariate tree. This lets the evidence use a coarse partition where little subdivision is needed and a finer partition where a departure is localized. Theorem~\ref{thm:tree} gives a finite-path bound that separates the information available in each retained cell from the costs of local betting and partition aggregation. Theorem~\ref{thm:universal} shows that mixing over increasingly fine trees recovers the full conditional sign-information rate under iid and positivity assumptions, and proves a matching upper bound. Proposition~\ref{prop:approx} measures the loss from using a finite partition. We also derive the local information projection and an exact mixture recurrence, and explain what the audit implies for quantile selection and regression inference.

\paragraph{Related work.}

Several strands of existing work provide the foundations for this audit. \citet{chernozhukov} construct quantile bands by inverting distribution-function bands, including for discrete outcomes, while \citet{howard} develop time-uniform quantile inference. Sequential testing through supermartingales is studied by \citet{casgrain} for elicitable and identifiable functionals and by \citet{waudby} for bounded means. The one-sided quantile e-statistics of \citet{wang} include the strict-upper-tail factor used here.

The constraints in our three-sign experiment fall within the complete-class theory of \citet{larsson} and its conditional extension by \citet{clerico}. Our calculation makes that class explicit for the quantile-set null. The continuous mixtures draw on universal portfolios \citep{cover,coverord}, and the tree recursion uses context-tree weighting \citep{willems}. Related contextual and countable-mixture constructions for sequential evidence appear in \citet{ramdasexchange}; \citet{orabona} connect universal-portfolio regret to time-uniform inference.

The closest recent forecast-auditing work is \citet{antonov}. Their feature-aware procedure tests exact conditional hit coverage and explicitly notes that this differs from quantile validity at atoms. They obtain finite-time guarantees for feature-aligned alternatives without an iid assumption. We instead test quantile-set membership and study the information available to a multiscale sign-based audit. This difference in targets matters when interpreting both the theoretical results and the numerical comparisons.

\paragraph{Contribution.}

Our contribution is the connection between the pruning bound, the attainable conditional information rate, and the loss caused by covariate coarsening for this two-inequality null. The pruning penalty is explicit: each retained cell pays its local mixture cost, and the tree pays $b_D(T)\log2$ for its stopping and splitting decisions. The rate result concerns iid alternatives with positive tie probability, whereas finite-sample validity follows from the conditional null. The paper develops a method for auditing issued forecasts; it does not propose a new quantile-regression estimator.

\section{The target and an identification limit}

For a distribution function $F$ and $0<\tau<1$, define
\begin{equation}\label{eq:qset}
\Q_\tau(F)=\{q\in\mathbb R:F(q-)\leq\tau\leq F(q)\}.
\end{equation}
When $\E|Y|<\infty$, this set also minimizes $\E\rho_\tau(Y-q)$, where $\rho_\tau(u)=u(\tau-\ind\{u<0\})$. The left and right derivatives of the convex objective are $F(q-)-\tau$ and $F(q)-\tau$, so its subgradient contains zero precisely when \eqref{eq:qset} holds. The audit uses the inequalities directly and needs no moment assumption on $Y$.

The lower generalized inverse, $q^-_\tau(F)=\inf\{y:F(y)\geq\tau\}$, selects one member of $\Q_\tau(F)$. When $F$ has a flat region at level $\tau$, the set can contain more than one value. The distinction between membership and selection then has an inferential consequence: without further restrictions on the distribution, a uniformly valid test of the lower-quantile convention cannot reject other members of the quantile set.

\begin{theorem}[Why the selection convention cannot be uniformly tested]
\label{thm:closure}
Fix a real $q$. Let $\mathcal H^-_q$ be all distributions whose lower $\tau$-quantile equals $q$, and let $\mathcal H^Q_q$ be all distributions for which $q\in\Q_\tau(F)$. If a possibly randomized test $\phi_n\in[0,1]$ based on $n$ independent observations satisfies
\[
\sup_{P\in\mathcal H^-_q}\E_{P^n}\phi_n\leq\alpha,
\]
then the same bound holds for every $P\in\mathcal H^Q_q$. Moreover, if an alarm time $T$ satisfies
\[
\sup_{P\in\mathcal H^-_q}\Pp_{P^\infty}(T<\infty)\leq\alpha,
\]
then $\Pp_{P^\infty}(T<\infty)\leq\alpha$ for every $P\in\mathcal H^Q_q$.
\end{theorem}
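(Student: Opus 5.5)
The plan is a closure argument in total variation. Every $P\in\mathcal H^Q_q$ will be approximated by distributions $P_k\in\mathcal H^-_q$ with $\|P_k-P\|_{TV}\to0$. For product measures,
\[
\|P_k^n-P^n\|_{TV}\le n\|P_k-P\|_{TV}.
\]
Since $\phi_n\in[0,1]$, this gives $|\E_{P_k^n}\phi_n-\E_{P^n}\phi_n|\le n\|P_k-P\|_{TV}\to0$, and so $\E_{P^n}\phi_n\le\alpha$. Randomization adds nothing new: an independent uniform variable enters both laws in the same way, and the bound holds conditionally on it.

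\textbf{Construction of $P_k$.} Take $P\in\mathcal H^Q_q$, so $F(q-)\le\tau\le F(q)$.
\begin{itemize}
\item If $F(y)<\tau$ for all $y<q$, then $q^-_\tau(F)=q$ already, and we set $P_k=P$.
\item Otherwise $F(q-)=\tau$, and $F$ equals $\tau$ on some interval $[y_0,q)$. In this case let $\varepsilon_k\downarrow0$ and set
\[
P_k=(1-\varepsilon_k)P+\varepsilon_k\delta_q.
\]
Its distribution function is $F_k(y)=(1-\varepsilon_k)F(y)$ for $y<q$, which is strictly less than $\tau$ there. At $q$ we have $F_k(q)=(1-\varepsilon_k)F(q)+\varepsilon_k\ge\tau$, because $F(q)\ge\tau$ and $\tau<1$. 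Hence $q^-_\tau(F_k)=q$, so $P_k\in\mathcal H^-_q$, and $\|P_k-P\|_{TV}\le\varepsilon_k$.
\end{itemize}
Unlike a perturbation that only shifts mass at the right end of the flat region, this construction handles every member of the quantile set uniformly.

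\textbf{Sequential statement.} Here the per-$n$ bound does not pass directly to $\{T<\infty\}$, because $n\varepsilon_k$ need not be small uniformly in $n$. Instead we fix $n$ and use
\[
\Pp_{P^\infty}(T\le n)\le\Pp_{P_k^\infty}(T\le n)+n\varepsilon_k\le\alpha+n\varepsilon_k.
\]
This is valid because $\{T\le n\}$ depends only on the first $n$ observations, together with any independent randomization. Letting $k\to\infty$ gives $\Pp_{P^\infty}(T\le n)\le\alpha$. Letting $n\to\infty$ and using monotone continuity of probability gives $\Pp_{P^\infty}(T<\infty)\le\alpha$.

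\textbf{Main obstacle.} The delicate step is checking $q^-_\tau(F_k)=q$ at the boundary. This requires $F_k(q-)\le\tau$, which holds with strict inequality below $q$, and $F_k(q)\ge\tau$. It also requires the infimum not to be attained at a smaller point; that follows from $F_k(y)\le(1-\varepsilon_k)\tau<\tau$ for all $y<q$, which in turn uses $F(q-)\le\tau$. Once this is verified, the rest is the routine total-variation continuity argument above.
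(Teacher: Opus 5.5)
Your proposal is correct and follows the paper's proof: the same contamination $P_\varepsilon=(1-\varepsilon)P+\varepsilon\delta_q$, the same $n\varepsilon$ bound on the product laws (you phrase it as subadditivity of total variation, where the paper uses an explicit coupling, but the estimate is identical), and the same passage through $\{T\le n\}$ followed by $n\uparrow\infty$. Your case split is unnecessary, since the contaminated law lies in $\mathcal H^-_q$ in both cases, but it does no harm.
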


\begin{proof}
For $P\in\mathcal H^Q_q$ and $0<\varepsilon<1$, set $P_\varepsilon=(1-\varepsilon)P+\varepsilon\delta_q$. For every $y<q$, $F(y)\leq F(q-)\leq\tau$, so $F_\varepsilon(y)<\tau$. Also $F_\varepsilon(q)\geq\tau$. Hence $P_\varepsilon\in\mathcal H^-_q$. Couple each observation from $P_\varepsilon$ with one from $P$, replacing it by $q$ with probability $\varepsilon$. The probability of any replacement among $n$ observations is at most $n\varepsilon$, so
\[
|\E_{P_\varepsilon^n}\phi_n-\E_{P^n}\phi_n|\leq n\varepsilon.
\]
Let $\varepsilon\downarrow0$. For the sequential assertion apply the finite-sample argument to $\ind\{T\leq n\}$, and then let $n\uparrow\infty$.
\end{proof}

Theorem~\ref{thm:closure} concerns testing under an unrestricted null, rather than estimation of the lower quantile. Any test with uniform finite-sample type-I control for that selection has the same rejection bound at every other valid selection. Shape or separation assumptions can change this conclusion. The argument is the elementary closure observation that adding an arbitrarily small atom at $q$ makes $q$ the lower quantile.

\section{Conditional sampling and the three-outcome experiment}

Let $\F_{t-1}$ contain previous responses, predictors, and any initial training data. Let
\[
\G_t=\sigma(\F_{t-1},X_t),\qquad
\F_{t-1}\subseteq\G_t\subseteq\F_t,
\]
where $Y_t$ is observed after $\G_t$. The forecast $q_t$ and all current bets are $\G_t$-measurable. Covariates may be allocated using previous outcomes. The null is
\begin{equation}\label{eq:null}
\Pp(Y_t<q_t\mid\G_t)\leq\tau,\qquad
\Pp(Y_t>q_t\mid\G_t)\leq1-\tau
\quad\text{a.s., for every }t.
\end{equation}
Condition~\eqref{eq:null} evaluates the forecasts actually issued. An independently trained model makes its forecasts available before the audit responses, but their correctness remains the hypothesis to be tested.

Write $Z_t=L,T,R$ according as $Y_t<q_t$, $Y_t=q_t$, or $Y_t>q_t$. Its conditional probabilities $(a_t,b_t,c_t)$ belong under the null to
\begin{equation}\label{eq:poly}
\mathcal P_\tau=
\{(a,b,c)\geq0:a+b+c=1,\ a\leq\tau,\ c\leq1-\tau\}.
\end{equation}
The four vertices of $\mathcal P_\tau$ are $(0,1,0)$, $(\tau,1-\tau,0)$, $(0,\tau,1-\tau)$, and $(\tau,0,1-\tau)$. Every point in this set is an admissible distribution of the three signs. Requiring the hit probability to equal $\tau$ imposes an additional restriction, as the following example shows.

\begin{proposition}[False alarms caused by changing the null]
\label{prop:false}
Suppose $q_t=0$, $\tau=1/2$, and the responses are independent with probabilities $(1/4,1/2,1/4)$ at $(-1,0,1)$. Then \eqref{eq:null} holds. Nevertheless, for $H_t=\ind\{Y_t\leq0\}$,
\[
M_n^{\rm eq}=\prod_{t=1}^n(3/2)^{H_t}(1/2)^{1-H_t}
\]
exceeds every fixed threshold eventually with probability one.
\end{proposition}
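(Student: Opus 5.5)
The proof has two parts. First, verify that the null \eqref{eq:null} holds for this distribution. Then show that $\log M_n^{\rm eq}$ is a random walk with strictly positive drift, so the strong law of large numbers forces it to infinity.

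\textbf{The null holds.} With $q_t=0$ and $\tau=1/2$, independence makes the conditional probabilities given $\G_t$ equal to the marginal ones. Thus $\Pp(Y_t<0\mid\G_t)=1/4\leq1/2$ and $\Pp(Y_t>0\mid\G_t)=1/4\leq1/2$. Equivalently, the sign probabilities are $(a,b,c)=(1/4,1/2,1/4)\in\mathcal P_{1/2}$. It is worth noting why $M^{\rm eq}$ is not an e-process here. It is a nonnegative martingale only under the exact-hit hypothesis $\Pp(H_t=1)=1/2$, since $\tfrac12\cdot\tfrac32+\tfrac12\cdot\tfrac12=1$. Here, by contrast, $\Pp(H_t=1)=\Pp(Y_t\leq0)=3/4$.

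\textbf{The drift is positive.} Write
\[
\log M_n^{\rm eq}=\sum_{t=1}^n \xi_t,\qquad \xi_t=H_t\log(3/2)+(1-H_t)\log(1/2).
\]
The $\xi_t$ are iid and bounded, with mean
\[
\mu=\tfrac34\log\tfrac32+\tfrac14\log\tfrac12=\tfrac14\log\tfrac{27}{16}>0.
\]
The positivity follows because $(3/2)^3/2=27/16>1$. Equivalently, $\mu$ is the Kullback--Leibler divergence $\KL(\mathrm{Ber}(3/4)\,\|\,\mathrm{Ber}(1/2))$, which is positive because the two Bernoulli laws differ.

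\textbf{Conclusion.} By the strong law, $n^{-1}\log M_n^{\rm eq}\to\mu>0$ almost surely. Hence $\log M_n^{\rm eq}\to\infty$ almost surely, so for every fixed threshold $c$ there is an almost surely finite $N$ with $M_n^{\rm eq}>c$ for all $n\geq N$. Taking a countable sequence of thresholds $c\uparrow\infty$ shows that this holds simultaneously for all thresholds with probability one.

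The only step needing any care is the sign of $\mu$, and that reduces to the elementary inequality $27>16$.
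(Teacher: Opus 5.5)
Your proof is correct and follows the paper's argument. You check that both strict-tail probabilities equal $1/4\leq1/2$, and then apply the strong law to the iid bounded log-increments, whose mean $\tfrac34\log\tfrac32+\tfrac14\log\tfrac12=\tfrac14\log\tfrac{27}{16}>0$. Your identification of this drift as $\KL(\mathrm{Ber}(3/4)\Vert\mathrm{Ber}(1/2))$ is a nice touch the paper leaves implicit.
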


\begin{proof}
The true median is zero. The strong law gives
\[
n^{-1}\log M_n^{\rm eq}
\to(3/4)\log(3/2)+(1/4)\log(1/2)>0.
\]
The displayed process is a valid exact-hit likelihood-ratio martingale when $\Pp(H_t=1\mid\G_t)=1/2$, but not an e-process for \eqref{eq:null}.
\end{proof}

\section{All valid sign factors and their information}

A sign factor is a nonnegative vector $e=(e_L,e_T,e_R)$. It is valid if $p^\top e\leq1$ for every $p\in\mathcal P_\tau$. For $p_0\in(0,1)$, a binary observation $h$, and $\lambda\in[0,1]$, define
\begin{equation}\label{eq:bet}
b_{p_0,\lambda}(h)=1+\lambda(h/p_0-1).
\end{equation}
Two valid families are
\[
e^L_\lambda(Z)=b_{\tau,\lambda}(\ind\{Z=L\}),\qquad
e^R_\lambda(Z)=b_{1-\tau,\lambda}(\ind\{Z=R\}).
\]
A tie receives the factor $1-\lambda$ in either family. Thus evidence for a strict-tail violation accumulates only when the corresponding strict-tail event occurs.

\begin{theorem}[Explicit complete class]\label{thm:complete}
A nonnegative sign factor $e$ is valid if and only if
\begin{equation}\label{eq:support}
e_T+\tau\pos{e_L-e_T}+(1-\tau)\pos{e_R-e_T}\leq1.
\end{equation}
Every valid factor is componentwise dominated by an element of
$\{e^L_\lambda:0\leq\lambda\leq1\}\cup\{e^R_\lambda:0\leq\lambda\leq1\}$.
Every element of this union is admissible among sign factors: no other valid sign factor strictly dominates it.
\end{theorem}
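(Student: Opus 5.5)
Validity means $\max_{p\in\mathcal P_\tau}p^\top e\le1$. Since $p\mapsto p^\top e$ is linear and $\mathcal P_\tau$ is the polytope \eqref{eq:poly}, this maximum is attained at one of the four listed vertices. I would therefore reduce validity to the four inequalities
\[
e_T\le1,\quad \tau e_L+(1-\tau)e_T\le1,\quad \tau e_T+(1-\tau)e_R\le1,\quad \tau e_L+(1-\tau)e_R\le1 .
\]
To identify this system with \eqref{eq:support}, I would instead maximize $p^\top e$ directly. Write $p^\top e=e_T+a(e_L-e_T)+c(e_R-e_T)$. The constraints on $(a,c)$ are $0\le a\le\tau$, $0\le c\le1-\tau$ and $a+c\le1$. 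The last is automatic given the first two, so $a$ and $c$ decouple. The supremum is then $e_T+\tau\pos{e_L-e_T}+(1-\tau)\pos{e_R-e_T}$, and validity is equivalent to \eqref{eq:support}. This is the support-function calculation, and it already proves the first claim.

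For the domination claim, take $e$ valid and split into cases according to whether $e_L\ge e_T$ and whether $e_R\ge e_T$. First compute the candidates explicitly:
\[
e^L_\lambda=(1-\lambda+\lambda/\tau,\ 1-\lambda,\ 1-\lambda),\qquad
e^R_\lambda=(1-\lambda,\ 1-\lambda,\ 1-\lambda+\lambda/(1-\tau)).
\]
Set $\lambda=1-e_T\in[0,1]$, which is allowed since $e_T\le1$ by \eqref{eq:support}. Then both $e^L_\lambda$ and $e^R_\lambda$ have tie component $e_T$, and the remaining components can be compared.

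If $e_R\le e_T$, then \eqref{eq:support} gives $e_T+\tau\pos{e_L-e_T}\le1$. Hence $e_L\le e_T+(1-e_T)/\tau=e^L_{\lambda,L}$, and $e_R\le e_T=e^L_{\lambda,R}$, so $e^L_\lambda$ dominates $e$. The case $e_L\le e_T$ is symmetric, with $e^R_\lambda$ dominating.

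The remaining case, $e_L>e_T$ and $e_R>e_T$ simultaneously, is the main obstacle, because neither family tolerates excess in both tails at the same tie level. Here I would lower the tie level instead. Put $s=\tau(e_L-e_T)$ and $r=(1-\tau)(e_R-e_T)$, so that $e_T+s+r\le1$. Try $e^L_{\lambda'}$ with $1-\lambda'=e_R$, so that its $T$ and $R$ entries equal $e_R\ge e_T$. Its $L$ entry is then $e_R+(1-e_R)/\tau$, and it dominates $e$ provided
\[
\tau e_L\le\tau e_R+1-e_R, \quad\text{equivalently}\quad \tau e_L+(1-\tau)e_R\le1,
\]
which is exactly the fourth vertex inequality. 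This also requires $e_R\le1$, which follows from $(1-\tau)e_R\le1-\tau e_T$ only when $e_T$ is large. If instead $e_R>1$, I would use $e^R$ with $1-\lambda'=e_L$, needing $e_L\le1$. At least one of $e_L,e_R$ is at most $1$: otherwise $\tau e_L+(1-\tau)e_R>1$, contradicting validity. So one of the two choices applies, and the fourth vertex inequality again yields domination. I would double-check this case carefully.

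Admissibility comes last. Suppose $e'\ge e^L_\lambda$ componentwise with some strict inequality and $e'$ valid. Evaluate $p^\top e'$ at a point $p$ where $p^\top e^L_\lambda=1$ and $p$ puts positive mass on every coordinate. For instance, take $p=(\tau,\varepsilon,1-\tau-\varepsilon)$, for which $p^\top e^L_\lambda=\tau/\tau\cdot\lambda+(1-\lambda)=1$ for any $\lambda$. Then $p^\top e'>1$, contradicting validity. The case $e^R_\lambda$ is symmetric.
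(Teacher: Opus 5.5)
Your proof is correct, and it differs from the paper's mainly in the domination step.

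\textbf{Validity and admissibility.} The validity characterization is the same support-function calculation the paper uses. The vertex reduction you start with becomes unnecessary once you observe that $a$ and $c$ decouple. Your admissibility argument is the paper's, done in one step. The point $(\tau,\varepsilon,1-\tau-\varepsilon)$ is a convex combination of the vertices $(\tau,0,1-\tau)$ and $(\tau,1-\tau,0)$. The paper uses these separately: the first fixes the two tail values, and the second fixes the tie value.

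\textbf{Domination.} The paper avoids your three-way case split with a lifting construction:
\begin{itemize}
\item Raise any tail value below $e_T$ to $e_T$; this leaves the left side of \eqref{eq:support} unchanged.
\item Add the remaining slack to all three coordinates, so that $\tau u_L+(1-\tau)u_R=1$.
\item Raise the tie value to $\min(u_L,u_R)$.
\item Read off $\lambda=1-\min(u_L,u_R)$ in the direction of the larger tail. Here $\lambda\ge0$ because $\min(u_L,u_R)\le\tau u_L+(1-\tau)u_R=1$.
\end{itemize}
This treats every sign pattern of $e_L-e_T$ and $e_R-e_T$ uniformly. Your version instead makes each case an explicit one-line check against a single vertex inequality. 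That is more transparent, at the cost of bookkeeping.

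\textbf{Your last case.} The same observation simplifies it. If $e_L\ge e_R$, then $e_R\le\tau e_L+(1-\tau)e_R\le1$. So you can take $e^L$ with tie level $e_R$ when $e_L\ge e_R$, and $e^R$ with tie level $e_L$ otherwise. This avoids the detour through $(1-\tau)e_R\le1-\tau e_T$. Your dichotomy is also correct: if both $e_L$ and $e_R$ exceeded one, the fourth vertex inequality would fail.
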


\begin{proof}
Because $(a,c)$ ranges independently over $[0,\tau]\times[0,1-\tau]$, maximizing
$e_T+a(e_L-e_T)+c(e_R-e_T)$ gives \eqref{eq:support}. Raise either tail value below $e_T$ to $e_T$; this does not change the maximum. Add the remaining nonnegative slack in \eqref{eq:support} to all three values. Denote the resulting factor by $u$. It dominates $e$, satisfies $u_L,u_R\geq u_T$, and obeys $\tau u_L+(1-\tau)u_R=1$.

Raise $u_T$ to $\min(u_L,u_R)$. Validity is preserved by \eqref{eq:support}. If $u_L\geq u_R$, the resulting vector is $(u_L,u_R,u_R)$ with $u_R\leq1$. Set $\lambda=1-u_R$ to obtain $e^L_\lambda$. The other case gives $e^R_\lambda$.

For admissibility consider $e^L_\lambda$. The null distribution $(\tau,0,1-\tau)$ gives expectation one and positive mass to both tails. A dominating valid vector must therefore agree on both tails. The null $(\tau,1-\tau,0)$ also gives expectation one, so the tie value cannot be increased. The argument for $e^R_\lambda$ uses $(0,\tau,1-\tau)$ instead.
\end{proof}

Theorem~\ref{thm:complete} reduces the one-step choice to two one-direction families. For any valid sign factor, one of these factors pays at least as much on every outcome. We will nevertheless mix entire evidence processes over directions, since the direction of a departure is unknown. The result applies to factors based on the three signs; residual magnitudes carry information outside this experiment.

Define binary divergence
\[
d(u\Vert v)=u\log\frac uv+(1-u)\log\frac{1-u}{1-v},
\]
with $0\log0=0$, and
\begin{equation}\label{eq:J}
J_\tau(a,b,c)=
\begin{cases}
d(a\Vert\tau),&a>\tau,\\
d(c\Vert1-\tau),&c>1-\tau,\\
0,&\text{otherwise}.
\end{cases}
\end{equation}
The two strict violations cannot occur together.

\begin{theorem}[Information projection and optimal log growth]
\label{thm:info}
For a strictly positive probability vector $p=(a,b,c)$,
\[
\sup_{e\text{ valid}}\E_p\log e(Z)
=\inf_{r\in\mathcal P_\tau}\KL(p\Vert r)=J_\tau(p).
\]
If $a>\tau$, a minimizing distribution and maximizing factor are
\[
r^*=\left(\tau,\frac{(1-\tau)b}{1-a},
\frac{(1-\tau)c}{1-a}\right),\quad
e^*=\left(\frac a\tau,\frac{1-a}{1-\tau},
\frac{1-a}{1-\tau}\right).
\]
The case $c>1-\tau$ is obtained by interchanging tails and replacing $\tau$ by $1-\tau$. If $p\in\mathcal P_\tau$, $r^*=p$ and $e^*=1$ are optimal. The value formula extends to boundary probabilities by continuity.
\end{theorem}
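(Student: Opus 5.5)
The proof will sandwich the common value between a weak-duality lower bound and explicit evaluations.

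\emph{Weak duality.} For any valid $e$ and any $r\in\mathcal P_\tau$, assume first $e_z>0$ wherever $p_z>0$; otherwise $\E_p\log e(Z)=-\infty$ and there is nothing to prove. Jensen's inequality gives
\[
\E_p\log e(Z)-\KL(p\Vert r)=\sum_z p_z\log\frac{r_z e_z}{p_z}\leq\log\sum_z r_z e_z\leq 0,
\]
where the last step is the validity of $e$ applied to $r\in\mathcal P_\tau$. Hence $\sup_e\E_p\log e\leq\inf_r\KL(p\Vert r)$.

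\emph{Equality when $a>\tau$.} In this case $c<1-\tau$ automatically. First check $r^*\in\mathcal P_\tau$: its coordinates sum to $\tau+(1-\tau)(b+c)/(1-a)=1$, its first coordinate equals $\tau$, and its third satisfies $(1-\tau)c/(1-a)\leq 1-\tau$. Next check that $e^*$ is valid. It equals $e^L_\lambda$ with $1-\lambda=(1-a)/(1-\tau)\in[0,1]$, which is valid by Theorem~\ref{thm:complete}; condition \eqref{eq:support} can also be verified directly. Because $e^*$ depends on $Z$ only through $\ind\{Z=L\}$,
\[
\E_p\log e^*=a\log(a/\tau)+(1-a)\log\frac{1-a}{1-\tau}=d(a\Vert\tau).
\]
In the KL computation, the ratio $p_z/r^*_z$ equals $a/\tau$ on $L$ and $(1-a)/(1-\tau)$ on both $T$ and $R$, so $\KL(p\Vert r^*)=d(a\Vert\tau)$ as well. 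Weak duality then forces both quantities to equal the common value $J_\tau(p)$, so $e^*$ and $r^*$ are optimal. The case $c>1-\tau$ follows by the symmetry $L\leftrightarrow R$, $\tau\leftrightarrow1-\tau$. If $p\in\mathcal P_\tau$, then $r=p$ gives KL zero and $e\equiv1$ gives log growth zero, so both sides equal $0=J_\tau(p)$.

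\emph{Boundary extension.} I expect this to be the only real obstacle, and the difficulty is bookkeeping rather than new ideas. When some coordinate of $p$ is zero, the explicit $e^*$ and $r^*$ still make sense. For example, if $a>\tau$ and $b=0$, then $r^*$ has a zero tie coordinate but still dominates $p$ absolutely, and the same two computations go through using the convention $0\log0=0$. So the sandwich argument applies verbatim whenever $r^*$ exists. The cases $a=1$ and $c=1$ need more care. If $a=1$, then $e^*$ has zero entries on $T$ and $R$, but these outcomes carry no $p$-mass, and $\E_p\log e^*=\log(1/\tau)=d(1\Vert\tau)$; the KL side with $r^*=(\tau,\cdot,\cdot)$ gives the same value. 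I would therefore present the boundary by direct verification case by case, and also note as an alternative that $J_\tau$ is continuous on the simplex. Continuity holds because $d(a\Vert\tau)\to0$ as $a\downarrow\tau$, so the two branches glue continuously to the zero region. This is what the statement's phrase ``by continuity'' requires.
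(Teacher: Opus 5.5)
Your proof is correct, and it differs from the paper's mainly in how you obtain the lower bound $\inf_{r\in\mathcal P_\tau}\KL(p\Vert r)\geq d(a\Vert\tau)$.

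The paper proves this half directly. It splits $\KL(p\Vert r)$ by the chain rule into $d(a\Vert r_L)$ plus $(1-a)$ times a conditional divergence on $\{T,R\}$. It then uses that $d(a\Vert v)$ decreases for $v<a$, together with $r_L\leq\tau$. Jensen's inequality is applied only at $r=r^*$, to bound $\E_p\log e$.

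You instead apply the same Jensen inequality to every pair $(e,r)$ to get weak duality, and close the gap with the matching certificate $(e^*,r^*)$. The divergence lower bound then follows from $\KL(p\Vert r)\geq\E_p\log e^*=d(a\Vert\tau)$, with no separate projection argument. Your route is shorter and makes the saddle-point structure explicit. It also extends to boundary $p$ simply by restricting the Jensen sum to the support of $p$, which can only decrease $\sum_z r_ze_z$.

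The paper's chain-rule step gives something yours does not: a betting-free proof that $r^*$ is the information projection. It shows that the nearest null law only lowers the mass of $\{Z=L\}$ to $\tau$ and rescales the tie and right-tail masses proportionally. That explains why the value reduces to a Bernoulli divergence.

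At the boundary, continuity of $J_\tau$ alone would not carry the identity over unless you also control the two optimal values. It is your direct case-by-case check that does the work there, including any completion of $r^*$ with $r^*_L=\tau$ when $a=1$.
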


\begin{proof}
Suppose $a>\tau$. The displayed $r^*$ is feasible. For any feasible $r$, splitting the divergence into the event $L$ and its complement yields
\[
\KL(p\Vert r)=d(a\Vert r_L)+(1-a)
\KL\left(\frac{(b,c)}{1-a}\,\middle\Vert\,
\frac{(r_T,r_R)}{1-r_L}\right)\geq d(a\Vert\tau).
\]
The last inequality follows because $d(a\Vert v)$ decreases in $v<a$, and $r_L\leq\tau$. Equality holds at $r^*$. The displayed $e^*$ is $e^L_\lambda$ with $\lambda=(a-\tau)/(1-\tau)$, and has expected log factor $d(a\Vert\tau)$.
For any valid $e$, Jensen's inequality gives
\[
\E_p\log e-\KL(p\Vert r^*)
=\E_p\log\left(e\frac{r^*}{p}\right)
\leq\log\E_{r^*}e\leq0.
\]
This proves optimality. The upper-tail case is identical. Under the null, Jensen gives $\E_p\log e\leq0$, attained by $e=1$. Limits or direct restriction to the support of $p$ give the boundary value formula.
\end{proof}

For a known alternative, the optimal factor treats ties and the nonviolating tail alike. Its log-growth rate is therefore the divergence in the appropriate strict-tail Bernoulli experiment. Ties still matter for specifying that experiment correctly. For example, randomly assigning half the ties to each tail produces a lower-hit probability of $a+b/2$, which need not equal $\tau$.

\section{Sequential construction and contextual adaptation}

Choose the direction and stake using $\G_t$, before observing $Y_t$. Under \eqref{eq:null}, the resulting factor has conditional expectation at most one. Iterated conditioning then makes its cumulative product a nonnegative supermartingale relative to $\F_t$. This gives finite-sample validity even when responses are dependent, provided the conditional inequalities hold at every round.

To avoid choosing a direction and stake in advance, integrate the fixed-stake products. For $k$ successes among $m$ Bernoulli observations, set
\begin{equation}\label{eq:B}
B_{p_0}(k,m)=\int_0^1
\{1+\lambda(1/p_0-1)\}^{k}(1-\lambda)^{m-k}\,d\lambda.
\end{equation}
If $N_{L,n}$ and $N_{R,n}$ are the strict-tail counts, define
\begin{equation}\label{eq:U}
U_n=\tfrac12 B_\tau(N_{L,n},n)
+\tfrac12 B_{1-\tau}(N_{R,n},n),\qquad U_0=1.
\end{equation}
Equation~\eqref{eq:U} averages the cumulative evidence from the two directions after integrating over stakes. The order matters: the products are formed before they are averaged.

Let $G_t=g(X_t)\in\{1,\ldots,K\}$ be a group label known before $Y_t$. Within each group build the process in \eqref{eq:U} from only its observations, writing its current value as $U_{g,n}$. Set
\begin{equation}\label{eq:C}
C_n=\prod_{g=1}^K U_{g,n}.
\end{equation}
At each round, only the factor for the current group changes. This makes the disjoint-group product valid without requiring independence between groups. To allow for a partition that adds little useful information, we can also retain the pooled process by setting $A_n=(U_n+C_n)/2$. More generally, a fixed probability mixture can combine audits for several partitions.

\begin{theorem}[Validity and finite-path evidence bound]
\label{thm:valid}
Under \eqref{eq:null}, $U_n$, $C_n$, and $A_n$ are nonnegative supermartingales starting at one. Thus each satisfies
\[
\Pp\{\exists n\geq1:E_n\geq1/\alpha\}\leq\alpha.
\]
For every realized sequence, let $n_g$ be its group sizes and $\widehat p_g$ its three empirical sign proportions. Then
\begin{equation}\label{eq:regret}
\log C_n\geq\sum_{g:n_g>0} n_gJ_\tau(\widehat p_g)
-\sum_{g:n_g>0}\log\{2(n_g+1)\}.
\end{equation}
In particular, if the right-hand side exceeds $\log(1/\alpha)$, the contextual audit has crossed its rejection threshold by time $n$.
\end{theorem}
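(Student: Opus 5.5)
The argument has two parts: validity through Fubini and a product-form supermartingale argument, and then a pathwise lower bound on each local mixture that costs $\log\{2(m+1)\}$.

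\emph{Validity.} Fix $\lambda\in[0,1]$. The process $\prod_{s\le n}e^L_\lambda(Z_s)$ has one-step factors with $\G_t$-conditional expectation at most one under \eqref{eq:null}. By Theorem~\ref{thm:complete}, $e^L_\lambda$ is valid for every $p\in\mathcal P_\tau$, and the conditional sign law lies in $\mathcal P_\tau$ almost surely. It is therefore a nonnegative supermartingale with respect to $\F_t$, and the same holds for $e^R_\lambda$. Integrating over $\lambda$ against Lebesgue measure preserves the supermartingale inequality by conditional Fubini, since the factors are nonnegative. The average of the two directions does as well, so $U_n$ is a nonnegative supermartingale with $U_0=1$.

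For $C_n$, write $C_t/C_{t-1}=U_{G_t,n_{G_t}(t)}/U_{G_t,n_{G_t}(t)-1}$. Only the current group's factor changes, and $G_t$ is $\G_t$-measurable. It then suffices that, conditionally on $\G_t$, the current group's mixture ratio has expectation at most one. I will verify this by writing $U_{g,m}/U_{g,m-1}$ as a posterior-weighted average over direction and $\lambda$ of one-step valid factors. The weights are determined by the group's past and hence are $\G_t$-measurable, so the conditional expectation is at most one. $A_n$ is a convex combination of supermartingales, hence a supermartingale. Ville's inequality then gives the crossing bound.

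\emph{Pathwise bound.} Because $\log C_n=\sum_g\log U_{g,n_g}$, it suffices to show that for $m$ observations with empirical proportions $\widehat p$,
\[
\log U_m\ge mJ_\tau(\widehat p)-\log\{2(m+1)\}.
\]
If $\widehat p\in\mathcal P_\tau$ this reads $\log U_m\ge-\log\{2(m+1)\}$. For that case I will use
\[
U_m\ge\tfrac12 B_\tau(k,m)\ge\tfrac12\int_0^1(1-\lambda)^m\,d\lambda=\frac{1}{2(m+1)},
\]
where the middle inequality holds because each base factor $1+\lambda(1/\tau-1)\ge1$.

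Now suppose $\widehat a=k/m>\tau$; the upper-tail case is symmetric. Drop the other direction at a cost of $\log 2$. The key step is then
\[
B_\tau(k,m)\ge\frac{1}{m+1}\exp\{m\,d(\widehat a\Vert\tau)\}.
\]
I will prove it by the change of variable $\mu=\tau+\lambda(1-\tau)$, which maps $[0,1]$ onto $[\tau,1]$. Then $1+\lambda(1/\tau-1)=\mu/\tau$ and $1-\lambda=(1-\mu)/(1-\tau)$, so
\[
B_\tau(k,m)=\frac{1}{(1-\tau)\tau^k(1-\tau)^{m-k}}\int_\tau^1\mu^k(1-\mu)^{m-k}\,d\mu .
\]
This is a truncated Beta integral. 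The standard universal-portfolio (KT/Laplace) comparison gives
\[
\int_0^1\mu^k(1-\mu)^{m-k}\,d\mu=\frac{1}{(m+1)\binom mk}\ge\frac{\widehat a^k(1-\widehat a)^{m-k}}{m+1},
\]
since $\binom mk\widehat a^k(1-\widehat a)^{m-k}\le1$.

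\emph{Main obstacle.} The hardest step is the truncation to $[\tau,1]$, together with absorbing the Jacobian factor $1/(1-\tau)$ without losing more than $\log(m+1)$. I expect to handle it by a monotonicity argument. The integrand on $[0,\tau]$ is at most the integrand's value at $\tau$, while the mass on $[\tau,1]$ (where the mode $\widehat a>\tau$ lies) dominates proportionally. An alternative is to compare $\int_\tau^1$ directly with $(1-\tau)\int_0^1$ of the affinely rescaled Beta. The factor $1-\tau$ from the Jacobian then cancels, and the rescaled integrand evaluated at the empirical stake $\lambda^*=(\widehat a-\tau)/(1-\tau)$ reproduces $\exp\{m\,d(\widehat a\Vert\tau)\}$, as in Theorem~\ref{thm:info}. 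Failing a clean argument, I would prove
\[
\int_0^1 f(\lambda)^k g(\lambda)^{m-k}\,d\lambda\ge\frac{\max_\lambda f^k g^{m-k}}{m+1}
\]
for the linear functions $f(\lambda)=1+\lambda(1/\tau-1)$ and $g(\lambda)=1-\lambda$ directly, using log-concavity and a Beta-type binomial identity for polynomials in $\lambda$.

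Summing the per-group bounds gives \eqref{eq:regret}. The last claim then follows because $\log C_n\ge\log(1/\alpha)$ is exactly threshold crossing.
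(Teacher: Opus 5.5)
Your validity argument (fixed-stake products, conditional Tonelli, the current-group ratio written as a $\G_t$-measurably weighted average of valid factors, then Ville) is correct and matches the paper. So is your reduction of \eqref{eq:regret} to one group and one direction, including the null-case bound $U_m\ge 1/\{2(m+1)\}$.

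The gap is the step you flag as the main obstacle. The inequality $B_\tau(k,m)\ge (m+1)^{-1}\exp\{m\,d(\widehat a\Vert\tau)\}$ carries the whole content of \eqref{eq:regret}, and none of your three routes proves it.
\begin{itemize}
\item \emph{Monotonicity.} Bounding $\int_0^\tau$ by $\tau$ times the integrand at $\tau$ and subtracting from the untruncated Beta integral gives a lower bound that turns negative when $\tau$ is just below $\widehat a$ and $m$ is large. The untruncated integral is only of order $\sqrt m$ times the target $(1-\tau)\widehat a^k(1-\widehat a)^{m-k}/(m+1)$, while the subtracted term $\tau^{k+1}(1-\tau)^{m-k}$ is of order $m$ times it. The ``proportional'' claim $\int_\tau^1\ge(1-\tau)\int_0^1$ is also false in general: for $\widehat a\downarrow\tau<1/2$ and large $m$, about half the Beta mass lies below $\tau$.
\item \emph{Affine rescaling.} Mapping $[\tau,1]$ back to $[0,1]$ just undoes your substitution and returns $B_\tau(k,m)=\int_0^1W(\lambda)\,d\lambda$, so this route is circular.
\item \emph{Log-concavity.} This alone cannot give the constant $1/(m+1)$. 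For instance, it gives no control near $\lambda=1$, where $W(1)=0$ when $k<m$.
\end{itemize}

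The missing idea is a positivity structure in the original stake variable, which makes the change of variables unnecessary. Each factor is $1+\lambda(h/p_0-1)=(1-\lambda)+\lambda x$ with $x=h/p_0\ge0$. Expanding the product therefore gives $W(\lambda)=\sum_{j=0}^m c_j\lambda^j(1-\lambda)^{m-j}$ with every $c_j\ge0$. Two facts now close the argument: $\binom mj\lambda^j(1-\lambda)^{m-j}\le1$, and $\int_0^1\lambda^j(1-\lambda)^{m-j}\,d\lambda=1/\{(m+1)\binom mj\}$. Together they give $W(\lambda)\le\sum_j c_j/\binom mj=(m+1)\int_0^1W$ for every $\lambda$. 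Evaluate at $\lambda^*=(\widehat a-\tau)/(1-\tau)$, where $W(\lambda^*)=\exp\{m\,d(\widehat a\Vert\tau)\}$, and you obtain your key inequality. This is the paper's argument.

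An equivalent repair, closer to what you were reaching for, replaces log-concavity by concavity of $W^{1/m}$. That function is a weighted geometric mean of nonnegative affine functions, hence concave. It therefore dominates the tent with apex $(\lambda^*,W(\lambda^*)^{1/m})$ that vanishes at $0$ and $1$. Integrating the $m$-th power of that tent gives exactly $W(\lambda^*)/(m+1)$.

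Without one of these structural facts, your Beta-type identity controls only the untruncated integral and does not close the argument.
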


\begin{proof}
Each fixed-stake product has conditional expectation at most its previous value. Integrating and mixing preserves this property. Every group mixture is strictly positive at finite times because stakes in a neighborhood of zero have positive prior mass. Its update ratio is a posterior-weighted mean of valid one-step factors. In \eqref{eq:C}, only the current group's ratio changes, so conditional expectation of the product is also at most its previous value. The maximum inequality for a nonnegative supermartingale gives the displayed error bound.

For the pathwise inequality, write a product of $m$ binary factors as
$W(\lambda)=\prod_{i=1}^m\{(1-\lambda)+\lambda x_i\}$, with $x_i\geq0$. Expansion gives
$W(\lambda)=\sum_{j=0}^m c_j\lambda^j(1-\lambda)^{m-j}$ with $c_j\geq0$. Since
\[
\int_0^1\lambda^j(1-\lambda)^{m-j}d\lambda
=\frac{1}{(m+1)\binom mj},\quad
\binom mj\lambda^j(1-\lambda)^{m-j}\leq1,
\]
$W(\lambda)\leq(m+1)\int_0^1W(v)dv$ for all $\lambda$.
For a binary success proportion $\widehat u$, maximizing the log product in \eqref{eq:B} over $\lambda$ gives $m d(\widehat u\Vert p_0)$ if $\widehat u>p_0$ and zero otherwise. The two-direction mixture loses at most $\log2$ relative to its better component. Apply this bound within each nonempty group and add.
\end{proof}

\begin{corollary}[Contextual information rate]\label{cor:rate}
Suppose $(G_t,Z_t)$ are independent and identically distributed, $\Pp(G_t=g)=\pi_g>0$, and $Z_t\mid G_t=g$ has distribution $p_g$. Then
\[
\frac1n\log C_n\longrightarrow I=\sum_g\pi_gJ_\tau(p_g)
\quad\text{almost surely}.
\]
The pooled process has rate $J_\tau(\sum_g\pi_gp_g)$. In particular $C_n$ eventually rejects almost surely whenever $I>0$.
\end{corollary}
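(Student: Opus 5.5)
The plan is to sandwich $n^{-1}\log C_n$ between a lower bound from the pathwise inequality \eqref{eq:regret} of Theorem~\ref{thm:valid} and an upper bound from the log-optimality statement of Theorem~\ref{thm:info}, then apply the strong law within each group.

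\emph{Lower bound.} Let $n_g$ be the count of group $g$ by time $n$ and $\widehat p_g$ its empirical sign proportions. The strong law gives $n_g/n\to\pi_g>0$ and $\widehat p_g\to p_g$ almost surely. $J_\tau$ is continuous on the simplex: on each piece it is a binary divergence in the first argument with a fixed interior second argument, and it vanishes on the boundaries $a=\tau$ and $c=1-\tau$. Hence $n^{-1}\sum_g n_gJ_\tau(\widehat p_g)\to I$. The penalty $\sum_g\log\{2(n_g+1)\}=O(\log n)$ disappears after dividing by $n$, so $\liminf n^{-1}\log C_n\geq I$.

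\emph{Upper bound.} Since $C_n=\prod_g U_{g,n}$, it suffices to show $\limsup n_g^{-1}\log U_{g,n}\leq J_\tau(p_g)$ for each group and then weight by $n_g/n\to\pi_g$. I would bound the mixture by its best component. $U_{g,n}$ is at most the larger of $B_\tau(N_L,m)$ and $B_{1-\tau}(N_R,m)$, with $m=n_g$. Each of these integrals is at most $\sup_\lambda\prod(\cdots)$, which equals $\exp\{m\,d(\widehat u\Vert p_0)\}$ when $\widehat u>p_0$ and $1$ otherwise. This is the maximization already computed in the proof of Theorem~\ref{thm:valid}.

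This gives the pathwise sandwich
\[
\sum_g n_gJ_\tau(\widehat p_g)-O(\log n)\leq\log C_n\leq\sum_g n_g\max\{\widehat J^L_g,\widehat J^R_g\},
\]
where $\widehat J^L_g=d(\widehat a_g\Vert\tau)\ind\{\widehat a_g>\tau\}$ and $\widehat J^R_g$ is defined analogously for the upper tail. Because $\widehat a_g>\tau$ and $\widehat c_g>1-\tau$ cannot hold simultaneously, this maximum is exactly $J_\tau(\widehat p_g)$. The two bounds therefore match, and continuity gives the limit $I$.

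\emph{Pooled process and rejection.} The pooled process is the case $K=1$: the pooled signs are iid with law $\sum_g\pi_gp_g$, so its rate is $J_\tau(\sum_g\pi_gp_g)$. If $I>0$, then $\log C_n\to\infty$ almost surely, so $C_n$ eventually exceeds $1/\alpha$.

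The main obstacle is continuity of $J_\tau$ at boundary probabilities and at the switching point $a=\tau$. Near the null boundary the divergence tends to zero, so the pieces glue continuously. When $p_g$ sits on the boundary, $\widehat p_g$ may oscillate across it, but $J_\tau(\widehat p_g)\to0$ regardless. When $b=0$ the vector is not strictly positive, but $J_\tau$ is defined directly, so no extension argument is needed.
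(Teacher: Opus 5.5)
Your proposal is correct and follows the paper's proof essentially step for step. The lower bound comes from the pathwise inequality \eqref{eq:regret} with the strong law and continuity of $J_\tau$; the upper bound comes from bounding each integral in \eqref{eq:B} by its largest integrand, so that $U_{g,n}\leq\exp\{n_gJ_\tau(\widehat p_g)\}$, and the pooled process is the case $K=1$. The only slip is your opening attribution of the upper bound to Theorem~\ref{thm:info}: the argument you actually carry out, and the one you need, is the maximal-integrand bound, not the log-optimality statement.
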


\begin{proof}
The lower bound follows from \eqref{eq:regret}, the strong law, continuity of $J_\tau$, and fixed $K$. Conversely, each integral in \eqref{eq:B} is bounded above by its maximum integrand. Thus $U_{g,n}\leq\exp\{n_gJ_\tau(\widehat p_g)\}$. Apply the strong law again. The same argument applies to the pooled process.
\end{proof}

The iid assumption in Corollary~\ref{cor:rate} is needed for the rate calculation, while Theorem~\ref{thm:valid} allows the broader conditional sampling scheme. The gain from using groups can be substantial. At $\tau=1/2$, take two equally likely groups with $p_1=(0.65,0.10,0.25)$ and $p_2=(0.25,0.10,0.65)$. Pooling gives $(0.45,0.10,0.45)$, which satisfies the quantile null and has information rate zero. The grouped audit instead has rate $I=d(0.65\Vert0.5)>0$. A finer partition can reveal such cancellation, although its mixture penalties and smaller cell counts may delay an alarm.

\paragraph{An information lower bound.}

For strictly positive $p_g$, let $P^*$ preserve the group probabilities and replace each $p_g$ by its null projection from Theorem~\ref{thm:info}. Then the per-observation divergence is $I$. Any alarm time $T$ based only on the group--sign observations in this iid experiment, with $P^*(T<\infty)\leq\alpha$, $Q(T<\infty)=1$, and $\E_QT<\infty$ satisfies
\begin{equation}\label{eq:stop}
\E_QT\geq\frac{\log(1/\alpha)}{I},\qquad I>0.
\end{equation}
Wald's identity gives expected stopped log-likelihood ratio $I\E_QT$, since the increments are bounded on the finite alphabet with strictly positive probabilities. Applying data processing to the event of a finite alarm gives a stopped-experiment divergence of at least $\log(1/P^*(T<\infty))$, which proves \eqref{eq:stop}. This is a lower bound on mean alarm time. Attaining an almost-sure information rate does not, on its own, establish attainment of this stopping-time bound.

\section{Multiscale contextual auditing}\label{sec:tree}

A fixed partition involves a tradeoff. Coarse cells can hide localized departures, while a fine partition can spend evidence on subdivisions that contribute little signal. We use context-tree weighting \citep{willems} to combine partitions before seeing the audit responses. The mixture interpretation follows the contextual and countable-mixture approach to sequential evidence in \citet{ramdasexchange}.

Let $\mathcal T_D$ be a fixed complete binary covariate tree of depth $D$. Each node $v$ represents a measurable cell, and its children partition that cell. The cells may be selected using independent training data, but are fixed during the audit. Let $U_{v,n}$ be \eqref{eq:U} using the signs of observations whose covariates fall in cell $v$. Set
\begin{equation}\label{eq:tree}
W_{v,n}=\begin{cases}
U_{v,n},& |v|=D,\\
\tfrac12 U_{v,n}+\tfrac12 W_{v0,n}W_{v1,n},& |v|<D.
\end{cases}
\end{equation}
Empty cells have $U_{v,n}=1$. A pruning $T$ stops at some internal nodes and splits at the others, so its leaf cells form a partition. Write $b_D(T)$ for the number of visited nodes at depth less than $D$, counting both stopping and splitting decisions. Forced terminal leaves cost no decision. If $T$ has $K$ leaves, then $b_D(T)\leq2K-1$.

\begin{theorem}[Tree oracle inequality]\label{thm:tree}
Under \eqref{eq:null}, the root process $W^{(D)}_n$ in \eqref{eq:tree} is a nonnegative supermartingale starting at one. For every realized sequence and every pruning $T$,
\begin{align}\label{eq:treebound}
\log W^{(D)}_n\ \geq\ &
\sum_{v\in\operatorname{leaves}(T):n_v>0}
n_vJ_\tau(\widehat p_v)\notag\\
&-\sum_{v\in\operatorname{leaves}(T):n_v>0}
\log\{2(n_v+1)\}-b_D(T)\log2.
\end{align}
In particular, the oracle comparison holds simultaneously over all prunings without selecting a partition from the responses.
\end{theorem}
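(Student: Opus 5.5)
The plan has two parts: validity by induction on the tree, and the pathwise bound by unrolling the recursion along a fixed pruning.

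\emph{Validity.} The key observation is that each $W_{v,n}$ is a probability mixture, with weights fixed in advance, of products $\prod_{u\in\operatorname{leaves}(S)}U_{u,n}$. Here $S$ ranges over prunings of the subtree rooted at $v$. The weight of $S$ is $2^{-b(S)}$, where $b(S)$ counts the decisions made inside that subtree. We prove this by induction from depth $D$ upward; the induction step simply expands $\tfrac12 U_{v,n}+\tfrac12W_{v0,n}W_{v1,n}$. The weights sum to one for the same inductive reason.

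For a fixed pruning, the leaves partition the covariate space into cells fixed before the audit. The label of the cell containing $X_t$ is $\G_t$-measurable, so the product over leaves is exactly the grouped process $C_n$ of \eqref{eq:C}. By Theorem~\ref{thm:valid}, it is a nonnegative supermartingale starting at one. A fixed convex combination of such processes is again a nonnegative supermartingale with initial value $\sum_S2^{-b(S)}=1$. The tree contains finitely many prunings, so there is no integrability issue.

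\emph{Pathwise bound.} Fix a pruning $T$. All terms in the expansion are nonnegative, so dropping every term except $T$ gives
\[
W^{(D)}_n\geq2^{-b_D(T)}\prod_{v\in\operatorname{leaves}(T)}U_{v,n}.
\]
Equivalently, we can unroll \eqref{eq:tree} directly: at each visited internal node keep only the branch that $T$ takes, and pay a factor $\tfrac12$ for that choice. Forced terminal leaves at depth $D$ use $W=U$ and pay nothing, which matches the definition of $b_D(T)$.

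Empty leaves contribute $U_{v,n}=1$. For each nonempty leaf, we apply the per-group argument from the proof of Theorem~\ref{thm:valid}. The stake-mixture loses at most $\log(n_v+1)$ relative to the best fixed stake. The two-direction average loses at most $\log2$. The best fixed stake achieves $n_vJ_\tau(\widehat p_v)$. This gives
\[
\log U_{v,n}\geq n_vJ_\tau(\widehat p_v)-\log\{2(n_v+1)\}.
\]
Taking logarithms and summing over leaves yields \eqref{eq:treebound}. The bound holds for every $T$ at once, because $W^{(D)}_n$ itself does not depend on $T$.

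\emph{Main obstacle.} The step needing the most care is the bookkeeping of the weight $2^{-b_D(T)}$. The recursion must charge exactly one factor $\tfrac12$ per visited node at depth less than $D$, whether $T$ stops or splits there, and nothing at forced leaves. This should be checked carefully in the induction. The rest reduces to Theorem~\ref{thm:valid}.
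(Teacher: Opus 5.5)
Your proposal is correct and follows essentially the same route as the paper. Both expand the recursion into the mixture $\sum_T 2^{-b_D(T)}\prod_{v\in\operatorname{leaves}(T)}U_{v,n}$, treat each fixed-pruning product as a disjoint-cell grouped process that is a supermartingale by Theorem~\ref{thm:valid}, take the fixed convex combination, and then keep only the term for $T$ and apply the per-cell bound \eqref{eq:regret}, with the factor $2^{-b_D(T)}$ charging one $\tfrac12$ per visited node of depth less than $D$ and nothing at forced leaves.
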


\begin{proof}
Expand the recursion. It gives the exact identity
\[
W^{(D)}_n=\sum_{T}\pi_D(T)
\prod_{v\in\operatorname{leaves}(T)}U_{v,n},
\qquad \pi_D(T)=2^{-b_D(T)}.
\]
The prior sums to one: at each nonterminal node it stops with probability $1/2$ or splits with probability $1/2$, with forced stopping at depth $D$. For a fixed pruning, exactly one leaf receives an observation at a time. Its updated mixture capital has conditional expected ratio at most one, given the past and the current covariate, by Theorem~\ref{thm:valid}. Multiplying by the unchanged capitals preserves that inequality. No independence between cells is required. A fixed convex combination of these supermartingales is again a supermartingale. Retaining the single term for $T$ and applying \eqref{eq:regret} proves the bound. Zero capitals may be handled directly by the component mixture representation; the continuous mixtures here are strictly positive for finite samples.
\end{proof}

Although \eqref{eq:tree} uses evidence at several levels of the tree, its expansion is a mixture of disjoint-cell products. Each observation contributes once within any given pruning. Multiplying the evidence from overlapping cells directly would reuse observations and generally lose validity. Only the ancestors of the current covariate cell need updating, so each observation requires $O(D)$ tree updates once local mixture values are available.

\subsection{Recovering the full conditional information}

Consider now an iid sign experiment $(X_t,Z_t)$ with $X_t\in[0,1]^d$ and conditional probabilities $p(x)=(a(x),b(x),c(x))$. It is induced, for example, by a fixed deployed forecast $q(x)$ and iid pairs $(X_t,Y_t)$. Bisect coordinates cyclically to obtain nested dyadic trees. Their finite partitions generate the Borel sigma field. No density of $X$ is assumed; use half-open cells with the outer boundary included. Define
\[
V_n=\sum_{D=0}^{\infty}2^{-D-1}W^{(D)}_n,
\qquad I=\E J_\tau(p(X)).
\]
The countable mixture lets every fixed resolution retain positive prior weight. It provides the benchmark for the rate theorem below; the numerical implementation uses a finite depth.

\begin{theorem}[Conditional information attainment]
\label{thm:universal}
The process $V$ is valid under \eqref{eq:null}. Suppose that under an iid alternative $Q$ there exists $\eta>0$ such that $p_z(X)\geq\eta$ almost surely for each $z\in\{L,T,R\}$. Then
\begin{equation}\label{eq:universal}
\frac{1}{n}\log V_n\longrightarrow I
\qquad Q\text{-almost surely}.
\end{equation}
Moreover, if $E_n\geq0$ is any evidence statistic measurable with respect to the first $n$ sign--covariate pairs, with $\E_P E_n\leq1$ for every iid law satisfying the conditional quantile null, then
\[
\limsup_{n\to\infty}\frac{\log E_n}{n}\leq I
\qquad Q\text{-almost surely}.
\]
Thus the bound concerns all such valid sign-based evidence sequences, not only tree mixtures. If the conditional quantile null is violated on a set of positive covariate probability, then $I>0$ and $V$ eventually crosses any fixed threshold almost surely.
\end{theorem}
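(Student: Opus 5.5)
Validity of $V$ is immediate. Each $W^{(D)}$ is a nonnegative supermartingale starting at one by Theorem~\ref{thm:tree}, and the weights $2^{-D-1}$ sum to one. A countable convex combination of such processes is again one, by monotone convergence applied to the conditional expectations. The maximal inequality then gives the threshold bound.

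For the lower bound in \eqref{eq:universal}, fix $D$ and use the trivial inequality $\log V_n\geq -(D+1)\log2+\log W^{(D)}_n$. Then apply Theorem~\ref{thm:tree} with the full depth-$D$ pruning, whose leaves are the $2^D$ dyadic cells $A$. The penalties are $O(2^D\log n)$, so they vanish after division by $n$. By the strong law, the empirical cell frequencies and sign proportions converge to $\pi_A=Q(X\in A)$ and $p_A=\E[p(X)\mid X\in A]$. Since $J_\tau$ is continuous, this gives
\[
\liminf_n n^{-1}\log V_n\geq I_D:=\sum_A\pi_AJ_\tau(p_A)\quad\text{a.s.}
\]
The main obstacle is showing $I_D\uparrow I$. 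Here I would use the martingale convergence theorem: $\E[p(X)\mid\sigma(\text{depth-}D\text{ cells})]\to p(X)$ a.s., because the cells generate the Borel field. Moreover $J_\tau$ is continuous and bounded on the region where every coordinate is at least $\eta$; this region is convex, so it contains the $p_A$. Bounded convergence then gives $I_D\to I$. Intersecting the countably many a.s.\ events and letting $D\to\infty$ yields $\liminf\geq I$.

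For the upper bound I would prove the general statement about any valid $E_n$, which also covers $V_n$ and so closes \eqref{eq:universal}. Let $P^*$ be the iid law with the same covariate marginal and conditional sign law $r^*(X)$, the projection from Theorem~\ref{thm:info}. This $r^*$ is measurable and lies in the null, so $\E_{P^*}E_n\leq1$. The positivity assumption makes $r^*$ bounded below as well; the tie coordinate of $r^*$ is at least $(1-\tau)\eta$, for instance. Let $\Lambda_n$ denote the likelihood ratio $dQ/dP^*$ on the first $n$ pairs, where the covariate factors cancel. Its log has iid bounded increments with mean $\E\KL(p(X)\Vert r^*(X))=I$, so $n^{-1}\log\Lambda_n\to I$ a.s.\ by the strong law. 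Next, Markov's inequality gives
\[
Q(E_n\geq e^{n(I+\epsilon)})\leq \E_Q[E_n\ind\{E_n\geq e^{n(I+\epsilon)}\}].
\]
I would split this expectation according to whether $\Lambda_n\leq e^{n(I+\epsilon/2)}$ or not. On the first part, the density bound and $\E_{P^*}E_n\leq1$ give the estimate $e^{n(I+\epsilon/2)}e^{-n(I+\epsilon)}$. This would be cleaner if $E_n$ is truncated: write $Q(E_n\geq c)\leq Q(\Lambda_n>e^{n(I+\epsilon/2)})+e^{n(I+\epsilon/2)}P^*(E_n\geq c)\leq Q(\cdot)+e^{-n\epsilon/2}$. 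The first term is not summable from the strong law alone, so I would instead use a Hoeffding bound for the bounded iid increments, which makes it exponentially small. Borel--Cantelli then gives $\limsup n^{-1}\log E_n\leq I+\epsilon$, and we let $\epsilon\downarrow0$ along a sequence.

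Finally, suppose the null fails on a set of positive probability. Then $J_\tau(p(X))>0$ on that set, because $J_\tau$ is positive exactly off $\mathcal P_\tau$. Hence $I>0$, so $\log V_n\to\infty$ a.s. and $V_n$ eventually crosses any fixed threshold.
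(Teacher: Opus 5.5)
Your proposal is correct. Validity, the lower bound and the consistency step follow the paper's proof. The paper cites Corollary~\ref{cor:rate} where you redo that argument from Theorem~\ref{thm:tree}. It also notes explicitly that zero-probability cells receive no observations, which your $I_D$ assumes without saying so.

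The genuine difference is the upper bound. The paper applies Markov's inequality under $Q$ to the ratio $E_n/L_n$. Since $\E_Q(E_n/L_n)=\E_{P^*}E_n\leq1$, this gives $Q(E_n/L_n\geq e^{n\epsilon})\leq e^{-n\epsilon}$. Borel--Cantelli then yields $\limsup_n n^{-1}\log(E_n/L_n)\leq0$, and the strong law for $\log L_n$ finishes. No deviation inequality is needed.

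You instead bound $Q(E_n\geq e^{n(I+\epsilon)})$ directly by splitting on $\{\Lambda_n\leq e^{n(I+\epsilon/2)}\}$. This is valid, using $Q\ll P^*$ and Markov's inequality under $P^*$. You then invoke Hoeffding to make $Q(\Lambda_n>e^{n(I+\epsilon/2)})$ summable. That works, because positivity bounds the increments, but it is unnecessary. Borel--Cantelli is needed only for the second term. The strong law already says the event $\{\Lambda_n>e^{n(I+\epsilon/2)}\}$ occurs only finitely often almost surely.

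What your route buys is an explicit nonasymptotic tail bound on $E_n$ itself: $Q(E_n\geq e^{n(I+\epsilon)})\leq e^{-\kappa n\epsilon^2}+e^{-n\epsilon/2}$, with $\kappa>0$ depending on $\eta$ and $\tau$. The paper's route buys brevity and relies only on the strong law.

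You should also delete your first displayed ``Markov'' inequality. As written it omits the factor $e^{-n(I+\epsilon)}$, so the estimate you then claim from it does not follow. The truncated decomposition you switch to is the correct argument.
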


\begin{proof}
Validity follows from the nonnegative countable mixture and conditional monotone convergence. For fixed $D$, the full depth-$D$ partition is one pruning with positive weight. The strong law and Corollary~\ref{cor:rate} give
\[
\liminf_n n^{-1}\log V_n\geq I_D
:=\E J_\tau\{\E[p(X)\mid\mathcal A_D]\},
\]
where $\mathcal A_D$ is the sigma field of its cells. Cells of zero $Q$-probability receive no observations almost surely and may be omitted when applying Corollary~\ref{cor:rate}. These statements hold simultaneously for the countable set of depths. The conditional-expectation martingale convergence theorem gives $\E[p(X)\mid\mathcal A_D]\to p(X)$ almost surely. The function $J_\tau$ is continuous and bounded on the probability simplex, so dominated convergence yields $I_D\to I$. This proves the lower bound.

For the upper bound, let $r^*(x)$ be the null information projection of $p(x)$ from Theorem~\ref{thm:info}, and let $P^*$ have the same covariate marginal as $Q$ and conditional signs $r^*(x)$. This is a measurable iid null law. Under the positivity assumption, its sign probabilities are bounded away from zero by a constant depending only on $\eta$ and $\tau$. Indeed, on the left-violating region the nonleft coordinates are $(1-\tau)p_z/(1-a)\geq(1-\tau)\eta$ and the left coordinate is $\tau$; the other region is analogous. Thus $Q$ and $P^*$ are equivalent on each finite sample. Let
\[
L_n=\prod_{t=1}^n\frac{p_{Z_t}(X_t)}{r^*_{Z_t}(X_t)}.
\]
The logarithms are bounded and iid, so $n^{-1}\log L_n\to I$ almost surely. For every $\epsilon>0$,
\[
Q(E_n/L_n\geq e^{n\epsilon})
\leq e^{-n\epsilon}\E_Q(E_n/L_n)
=e^{-n\epsilon}\E_{P^*}E_n\leq e^{-n\epsilon}.
\]
Borel--Cantelli, followed by a countable sequence $\epsilon\downarrow0$, proves the upper bound. Apply it to $V_n$ to obtain equality. Finally, $J_\tau(p)>0$ exactly outside $\mathcal P_\tau$, proving the consistency assertion.
\end{proof}

The positivity assumption includes the tie probability. Thus Theorem~\ref{thm:universal} establishes the rate for iid alternatives with all three sign probabilities uniformly bounded away from zero. Continuous outcomes with no ties, drifting alternatives, and finite-depth truncations fall outside this rate statement. The finite-sample validity of the mixtures continues to follow from \eqref{eq:null} in those settings.

\begin{proposition}[Information lost at finite resolution]
\label{prop:approx}
Under the positivity assumption of Theorem~\ref{thm:universal}, for any finite covariate partition $\mathcal A$, put $\bar a=\E(a\mid\mathcal A)$ and $\bar c=\E(c\mid\mathcal A)$. Then
\[
0\leq I-\E J_\tau(\E[p\mid\mathcal A])
\leq
\frac{\E\{(a-\bar a)^2+(c-\bar c)^2\}}{2\eta(1-\eta)}.
\]
If both strict-tail probabilities are H\"older of order $s\in(0,1]$ with constant $H$ in Euclidean distance and every cell has diameter at most $h$, the upper bound is at most $H^2h^{2s}/\{\eta(1-\eta)\}$.
\end{proposition}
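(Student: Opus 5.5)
Write $\bar p=\E[p\mid\mathcal A]=(\bar a,\bar b,\bar c)$. The plan is to treat the lower bound as a convexity statement and the upper bound as a second-order expansion of $J_\tau$ around the cell average.

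\textbf{Lower bound.} By Theorem~\ref{thm:info}, $J_\tau(p)=\inf_{r\in\mathcal P_\tau}\KL(p\Vert r)$. This is an infimum over a convex set of a jointly convex function, so $J_\tau$ is convex on the simplex. Conditional Jensen then gives $\E[J_\tau(p)\mid\mathcal A]\geq J_\tau(\bar p)$. Taking expectations yields $I\geq \E J_\tau(\bar p)$.

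\textbf{Upper bound.}
\begin{enumerate}
\item Write $J_\tau(p)=\max\{\phi_L(a),\phi_R(c)\}$, where $\phi_L(a)=d(a\Vert\tau)\ind\{a>\tau\}$ and $\phi_R(c)=d(c\Vert1-\tau)\ind\{c>1-\tau\}$. Only one of the two terms is nonzero, because the strict violations are exclusive, as noted before Theorem~\ref{thm:info}.
\item Each $\phi$ is convex and $C^1$. The value and the derivative of $d(\cdot\Vert\tau)$ both vanish at $\tau$.
\item Its second derivative is $1/\{u(1-u)\}$.
\item Under positivity $a,c\in[\eta,1-\eta]$, and cell averages stay in the same range. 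On $[\eta,1-\eta]$ we have $u(1-u)\geq\eta(1-\eta)$.
\end{enumerate}

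Next, pick a subgradient $g(\bar p)$ of $J_\tau$ at $\bar p$. Inside a cell it is $\mathcal A$-measurable, and it only involves the $a$- or $c$-coordinate.
\begin{enumerate}
\item If $\bar a>\tau$, take $g=\phi_L'(\bar a)$ along the $a$-direction.
\item If $\bar c>1-\tau$, take $g=\phi_R'(\bar c)$ along the $c$-direction.
\item Otherwise take $g=0$.
\end{enumerate}
Then bound
\[
J_\tau(p)\leq J_\tau(\bar p)+g\cdot(p-\bar p)+\frac{(a-\bar a)^2+(c-\bar c)^2}{2\eta(1-\eta)}.
\]
This would follow from the pointwise bounds
\[
\phi_L(a)\leq\phi_L(\bar a)+\phi_L'(\bar a)(a-\bar a)+\frac{(a-\bar a)^2}{2\eta(1-\eta)},
\]
and the analogue for $\phi_R$. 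Each holds because $\phi_L'$ is Lipschitz with constant $1/\{\eta(1-\eta)\}$ on $[\eta,1-\eta]$: it is zero below $\tau$ and has that slope bound above $\tau$.

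The max of two functions needs care. In the case $\bar a>\tau$, use $J_\tau(p)\leq\phi_L(a)+\phi_R(c)$. Expand $\phi_L$ at $\bar a$. Expand $\phi_R$ at $\bar c$, where $\phi_R(\bar c)=\phi_R'(\bar c)=0$ because $\bar c<1-\tau$. The linear terms average to zero within each cell, since $\E[a-\bar a\mid\mathcal A]=0$. Summing the two quadratic remainders gives exactly the stated bound after taking expectations. The other cases are symmetric.

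I expect the main obstacle to be this max-of-two-pieces issue. The sum bound $\phi_L+\phi_R$ is what makes the two squared terms appear, so I would use it and check that it is not too loose.

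\textbf{Hölder corollary.} For $x$ in a cell $A$ we have $|a(x)-\bar a|\leq\sup_{y\in A}|a(x)-a(y)|\leq Hh^s$, and the same for $c$. So the numerator is at most $2H^2h^{2s}$. Dividing by $2\eta(1-\eta)$ gives $H^2h^{2s}/\{\eta(1-\eta)\}$.
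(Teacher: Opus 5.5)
Your proposal is correct and follows essentially the paper's argument: split $J_\tau$ into the two one-dimensional strict-tail pieces, apply the smooth Taylor (descent-lemma) bound with derivative-Lipschitz constant $1/\{\eta(1-\eta)\}$ conditionally within each cell so the linear terms average out, use Jensen for the lower bound, and bound the within-cell oscillation by $Hh^s$. The differences are only cosmetic: you obtain convexity from the information-projection representation rather than from convexity of each piece, and your case split is unnecessary, because exclusivity of the violations makes $J_\tau(p)=\phi_L(a)+\phi_R(c)$ an identity on the simplex, so the sum bound you worried about is not loose at all.
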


\begin{proof}
Write $f_u(v)=d(v\Vert u)\ind\{v>u\}$. On the simplex, $J_\tau(p)=f_\tau(a)+f_{1-\tau}(c)$, since the two violations cannot coexist. Each $f_u$ is convex, continuously differentiable, and has Lipschitz derivative at most $1/\{\eta(1-\eta)\}$ on $[\eta,1-\eta]$: its second derivative is either zero or $1/\{v(1-v)\}$, with matching first derivative zero at $v=u$. The smooth convex Taylor bound applied conditionally about $\bar a,\bar c$ has vanishing linear terms after taking expectations. Jensen's inequality gives the lower bound. Within a cell the oscillation of each tail probability is at most $Hh^s$, so each squared centered deviation is bounded by $H^2h^{2s}$. Substitution proves the final claim.
\end{proof}

\subsection{Exact continuous mixtures without a stake grid}

For a horizon $N$, all local mixture values can be tabulated stably. For $0<p<1$ and $0\leq k\leq m$, the integral in \eqref{eq:B} obeys
\begin{equation}\label{eq:recurrence}
B_p(0,m)=\frac{1}{m+1},\qquad
B_p(k,m)=
\frac{1+k(1-p)B_p(k-1,m)/p}{m+1-k}
\quad(k\geq1).
\end{equation}
For the derivation, substitute $r=p+(1-p)\lambda$ in \eqref{eq:B} and integrate the derivative of $r^k(1-r)^{m-k+1}$ over $[p,1]$. The boundary term is $-p^k(1-p)^{m-k+1}$. Rearranging and normalizing gives the constant term one and the remaining term involving $B_p(k-1,m)$ in \eqref{eq:recurrence}.

We evaluate the recurrence with log-sum-exp to avoid overflow. For each distinct tail bound, tabulation through a horizon $N$ takes $O(N^2)$ time and memory. Subsequent local updates are table lookups, giving an $O(D)$ tree update per observation. The recurrence evaluates the continuous mixture in floating-point arithmetic and avoids choosing a finite stake grid. Its quadratic storage requirement limits the horizon; it is not a constant-memory implementation of the infinite mixture $V$.

\section{Regression inference and implementation}

For a candidate parameter $\beta$, set $q_t(\beta)=X_t^\top\beta$ and construct $E_n(\beta)$ by the preceding procedure. Define
\begin{equation}\label{eq:cs}
\mathcal C_n=\{\beta:\max_{s\leq n}E_s(\beta)<1/\alpha\}.
\end{equation}
For every fixed $\beta_0$ satisfying \eqref{eq:null} at all times,
\[
\Pp\{\beta_0\in\mathcal C_n\text{ for all }n\}\geq1-\alpha.
\]
Apply Theorem~\ref{thm:valid} at the fixed true value $\beta_0$ to obtain this coverage statement. Inverting the tests does not require a multiplicity correction across candidate parameters for pointwise coverage of $\beta_0$. Projection gives a confidence sequence for a coordinate or fixed linear contrast. Simultaneous containment of an entire partially identified parameter set is a different requirement.

The coverage claim requires conditional quantile correctness. A coefficient minimizing unconditional expected check loss in a misspecified linear family need not satisfy \eqref{eq:null}, and the audit may reject it. Computation also requires care: the signs change across the hyperplanes $Y_t=X_t^\top\beta$, and exact-fit faces have different tie factors from neighboring regions. A coarse grid is therefore only an exploratory approximation to the continuous parameter set. The numerical examples below audit issued forecasts rather than compute coefficient confidence sets.

\paragraph{Finite-grid comparator.}

For the auxiliary experiment and count-response illustration, we use $L=20$ fixed stakes $\lambda_\ell=(\ell-1/2)/L$, $\ell=1,\ldots,L$, and replace each integral by the uniform average of its $L$ products. This is a valid discrete mixture, not a quadrature approximation on which validity depends. If $M_{g,d,\ell}$ is the capital in group $g$, direction $d$, and stake $\ell$, initialize all capitals at one and update the current group's capitals by
\[
M_{g,d,\ell}\leftarrow M_{g,d,\ell}
\left[1+\lambda_\ell
\left\{\frac{\ind\{Z_t=d\}}{p_d}-1\right\}\right],
\quad p_L=\tau,\ p_R=1-\tau.
\]
The local capital is $U_{g,n}=(2L)^{-1}\sum_{d,\ell}M_{g,d,\ell}$. We store log capitals and use log-sum-exp to avoid underflow. Updating a partition costs $O(L)$ per observation and requires $O(KL)$ storage. The grid mixture pays at most $\log(2L)$ relative to its best component, but its finite set of positive stakes can miss sufficiently weak departures. It therefore does not inherit \eqref{eq:regret} or consistency for arbitrarily small violations. A countable dense prior restores consistency, while the continuous mixture gives the stated finite-path bound.

\section{Reproducible numerical study}

\subsection{Adaptation across covariate resolutions}

The primary experiment uses the exact continuous mixtures of \eqref{eq:recurrence}. For each of eight designs we simulate 2000 independent streams of length 1500, with $\tau=1/2$, $\alpha=0.05$, and PCG64 seed 20261012. Let $X\in[0,1]$ and issue $q(X)=1+2X$ for $Y=q(X)+V$, where $V\in\{-1,0,1\}$ has probabilities $(a,b,c)$. Apart from the adaptive null, $X$ is uniform and independent across observations. We compare the pooled audit, fixed dyadic partitions of depths 1, 3, and 5, and the depth-5 tree mixture. Each procedure uses the same local continuous mixtures and tests the same conditional quantile-set null.

The continuous-null sign law is $(.5,0,.5)$, which also arises from a continuous symmetric residual; the atomic null is $(.25,.5,.25)$. For the adaptive null, $(a,b,c)=(.5,.2,.3)$ on $X<.5$ and $(.3,.2,.5)$ otherwise. Its covariate falls in the upper half with probability $.8$ after a left sign and $.2$ otherwise, uniformly within the selected half, with initial preceding sign left. The global alternative has $(.6,.1,.3)$ everywhere. Coarse cancellation uses $(.65,.1,.25)$ on the lower half and $(.25,.1,.65)$ on the upper half. The localized alternative uses $(.8,.1,.1)$ on $[0,1/8)$ and $(.4,.2,.4)$ elsewhere. Fine cancellation alternates $(.7,.1,.2)$ and $(.2,.1,.7)$ across the eight depth-3 cells. Weak cancellation uses $(.55,.1,.35)$ and its reversal on the two halves.

\begin{table}[ht]
\centering\small
\caption{Rejection at any time through 1500 observations, 2000 streams per design. All columns test the same quantile-set null. Monte Carlo standard errors are at most $0.0112$; exact binomial intervals and alarm times are supplied in the supplement.}
\label{tab:tree}
\begin{tabular}{lccccc}\toprule
Design & Pooled & Depth 1 & Depth 3 & Depth 5 & Tree\\\midrule
Continuous null & 0.0395 & 0.0445 & 0.0430 & 0.0360 & 0.0415\\
Atomic null & 0.0000 & 0.0000 & 0.0000 & 0.0000 & 0.0000\\
Adaptive null & 0.0010 & 0.0170 & 0.0005 & 0.0000 & 0.0030\\
Global alternative & 1.0000 & 1.0000 & 0.9595 & 0.0725 & 1.0000\\
Coarse cancellation & 0.0105 & 1.0000 & 1.0000 & 0.9860 & 1.0000\\
Localized error & 0.0040 & 0.0060 & 0.7965 & 0.0005 & 0.9710\\
Fine cancellation & 0.0060 & 0.0055 & 1.0000 & 1.0000 & 1.0000\\
Weak cancellation & 0.0060 & 0.5360 & 0.0435 & 0.0025 & 0.3455\\
\bottomrule\end{tabular}
\end{table}

The tree adapts to several forms of departure in Table~\ref{tab:tree}. For the localized error, it rejects with probability $0.971$, compared with $0.7965$ for the full depth-3 partition. A pruning can refine the affected region while leaving other cells coarse. For a global departure, the tree and pooled audit reject every stream, whereas the full depth-5 partition rejects only $0.0725$. Adaptation also has a cost: under weak cancellation, the correctly chosen depth-1 partition rejects $0.536$ of streams, compared with $0.3455$ for the tree. The null rejection proportions are consistent with the theoretical bound. These comparisons describe how partition choice affects this audit; they do not compare every possible feature-learning strategy.

We checked the recurrence against numerical quadrature in 300 cases, with maximum relative error $9.0\times10^{-15}$. Explicit enumeration of the five depth-2 prunings agreed with the tree recursion, and the finite-path oracle bound held on 200 generated streams. These numerical checks provide an implementation check alongside the proofs.

\subsection{Matched-null predictable betting and detection times}
\label{sec:matched}

We next compare the tree with three predictable betting rules on the same 2000 streams per design. Each uses the strict-tail factors in \eqref{eq:bet}, the two-inequality null, and threshold 20. This keeps the target fixed while varying the local betting rule and the way covariate information is used. The stake rules were specified before the comparison was run.

The pooled plug-in comparator estimates each strict-tail probability from previous signs. If $s$ successes have occurred in $m$ previous observations, it uses
\[
\widehat u=(s+1)/(m+2),\qquad
\lambda=\min\{0.99,\max\{0,(\widehat u-p_d)/(1-p_d)\}\},
\quad p_L=\tau,\ p_R=1-\tau.
\]
The pooled plug-in capital averages the left- and right-direction products. The cell plug-in rule estimates the probabilities separately in the eight depth-3 cells, mixes directions within each cell, and multiplies the cell capitals as in \eqref{eq:C}. These are predictable bounded-mean bets of the kind studied by \citet{waudby}; the plug-in stake follows from the oracle factor in Theorem~\ref{thm:info}.

The feature comparator uses the same eight cell indicators, with a separate Online Newton Step learner for each direction. We adapt the contextual betting approach of \citet{antonov} to the strict-tail inequalities. Initialize $\theta_{d,j}=0$ and $A_{d,j}=1$. In the current cell $j$, issue $\lambda=\theta_{d,j}$, observe $v=\ind\{Z_t=d\}/p_d-1$, multiply the direction's capital by $1+\lambda v$, and update
\[
g=\frac{v}{1+\theta_{d,j}v},\qquad
A_{d,j}\leftarrow A_{d,j}+g^2,\qquad
\theta_{d,j}\leftarrow
\Pi_{[0,0.99]}(\theta_{d,j}+g/A_{d,j}).
\]
Only the coordinate for the current cell is updated. The reported evidence averages the two direction capitals. All stakes are chosen before the response and lie in $[0,0.99]$, so the conditional mean factor is at most one under \eqref{eq:null}. Disjoint-cell products and direction mixtures preserve this validity. This comparator changes the target of \citet{antonov} to the strict-tail inequalities; its fixed dictionary, initialization, and update rule define the particular configuration evaluated here.

\begin{table}[ht]
\centering\small
\caption{Matched-null anytime rejection proportions on the primary streams. Plug-in columns use pooled signs or eight depth-3 cells; feature ONS uses those eight cell indicators. The tree column repeats Table~\ref{tab:tree}. Exact binomial intervals, paired rejection differences, and all alarm times are supplied with the reproducibility files.}
\label{tab:matched}
\begin{tabular}{lcccc}\toprule
Design & Pooled plug-in & Cell plug-in & Feature ONS & Tree\\\midrule
Continuous null & 0.0320 & 0.0420 & 0.0415 & 0.0415\\
Atomic null & 0.0000 & 0.0000 & 0.0000 & 0.0000\\
Adaptive null & 0.0005 & 0.0210 & 0.0090 & 0.0030\\
Global alternative & 1.0000 & 0.9955 & 0.1830 & 1.0000\\
Coarse cancellation & 0.0060 & 1.0000 & 0.4940 & 1.0000\\
Localized error & 0.0020 & 1.0000 & 0.2840 & 0.9710\\
Fine cancellation & 0.0015 & 1.0000 & 0.6220 & 1.0000\\
Weak cancellation & 0.0040 & 0.3000 & 0.0275 & 0.3455\\
\bottomrule\end{tabular}
\end{table}

We report the restricted mean alarm time $\E\min(T,1500)$, assigning 1500 to a stream that has not alarmed by the horizon. Unlike a median among rejections, this measure uses every stream. Table~\ref{tab:times} reports the empirical means for the alternatives. Smaller values indicate earlier detection within the horizon and should be read together with the rejection proportions.

\begin{table}[ht]
\centering\small
\caption{Empirical restricted mean alarm times $\widehat\E\min(T,1500)$ for the matched-null comparison. Each entry averages all 2000 streams, including nonrejections; Monte Carlo standard errors are supplied with the reproducibility files.}
\label{tab:times}
\begin{tabular}{lcccc}\toprule
Design & Pooled plug-in & Cell plug-in & Feature ONS & Tree\\\midrule
Global alternative & 256.3 & 464.8 & 1289.5 & 273.7\\
Coarse cancellation & 1491.1 & 173.3 & 976.8 & 157.9\\
Localized error & 1497.0 & 305.7 & 1271.1 & 841.5\\
Fine cancellation & 1497.8 & 93.7 & 700.3 & 208.9\\
Weak cancellation & 1494.0 & 1273.7 & 1462.5 & 1295.4\\
\bottomrule\end{tabular}
\end{table}

The cell plug-in rule detects localized errors much earlier than the tree: the restricted mean times are 305.7 and 841.5, with rejection proportions 1.0000 and 0.9710. It is also faster under fine cancellation. The tree is faster for the global departure and rejects more often under weak cancellation, although its restricted mean time is larger in the latter case. The feature comparator performs less well in this configuration, but its results depend on the chosen dictionary and learner settings. The practical benefit of the tree is adaptation over resolutions with a pathwise guarantee; these experiments also show that a well-chosen local betting rule can be faster for a particular alternative.

\subsection{Auxiliary comparison of quantile and exact-hit targets}

We ran $R=2000$ independent repetitions per design, each with $N=1000$ observations, $\tau=0.5$, and threshold $1/\alpha=20$. A NumPy PCG64 generator used seed 20261011. Each response has the form
\[
Y_t=1+2X_t+V_t,\qquad q_t=1+2X_t,\qquad X_t\in\{0,1\}.
\]
Table~\ref{tab:design} specifies the sign probabilities of $V_t$. Nonzero magnitudes may be set to one; independent positive exponential magnitudes would give the same sign experiment in the continuous-null design. The implemented audit uses only signs. Outside the adaptive design, $X_t$ is an independent fair Bernoulli variable. In the adaptive design, its conditional success probability is $0.8$ after a left sign and $0.2$ otherwise, with the preceding sign initialized to $L$.

\begin{table}[ht]
\centering\small
\caption{Simulation sign probabilities $(a,b,c)$. The first four designs satisfy the conditional quantile null.}
\label{tab:design}
\begin{tabular}{lll}\toprule
Design & $X=0$ & $X=1$\\\midrule
Continuous null & $(.50,0,.50)$ & $(.50,0,.50)$\\
Atom interior & $(.25,.50,.25)$ & $(.25,.50,.25)$\\
Atom boundary & $(.50,.25,.25)$ & $(.50,.25,.25)$\\
Adaptive null & $(.50,.20,.30)$ & $(.30,.20,.50)$\\
Global alternative & $(.65,.10,.25)$ & $(.65,.10,.25)$\\
Cancellation & $(.65,.10,.25)$ & $(.25,.10,.65)$\\
Weak cancellation & $(.55,.10,.35)$ & $(.35,.10,.55)$\\
\bottomrule
\end{tabular}
\end{table}

We compare the pooled discrete mixture, the product of the two group mixtures, their equal mixture, and a pooled exact-hit audit. The last uses the same stakes and two betting directions, but tests a Bernoulli variable $\ind\{Y_t\leq q_t\}$ with null mean $\tau$. Its target is therefore different. Rejection at a valid atomic quantile illustrates this distinction; it is not a type-I error relative to the exact-hit equality null. The comparator is a pooled equality audit, rather than the full feature-learning procedure of \citet{antonov}.

We record whether each process crosses the threshold at any time through $N$ and report Monte Carlo uncertainty for the estimated rejection proportion.

\begin{table}[ht]
\centering\small
\caption{Anytime rejection proportions through 1000 observations. Parentheses show Monte Carlo standard errors. For proportions zero or one, the exact two-sided 95\% binomial intervals are $[0,0.00184]$ and $[0.99816,1]$, respectively.}
\label{tab:sim}
\begin{tabular}{lcccc}\toprule
Design & Pooled & Context & Combined & Exact hit\\\midrule
Continuous null & 0.0325 (0.004) & 0.0400 (0.004) & 0.0365 (0.004) & 0.0325 (0.004)\\
Atom interior & 0.0000 (0.000) & 0.0000 (0.000) & 0.0000 (0.000) & 1.0000 (0.000)\\
Atom boundary & 0.0155 (0.003) & 0.0085 (0.002) & 0.0130 (0.003) & 1.0000 (0.000)\\
Adaptive null & 0.0000 (0.000) & 0.0115 (0.002) & 0.0050 (0.002) & 1.0000 (0.000)\\
Global alternative & 1.0000 (0.000) & 1.0000 (0.000) & 1.0000 (0.000) & 1.0000 (0.000)\\
Cancellation & 0.0060 (0.002) & 1.0000 (0.000) & 1.0000 (0.000) & 0.5180 (0.011)\\
Weak cancellation & 0.0110 (0.002) & 0.3280 (0.010) & 0.2680 (0.010) & 0.5360 (0.011)\\
\bottomrule\end{tabular}
\end{table}

The quantile-set audits reject fewer than $0.05$ of streams in each of the four null designs. At the interior atom, none rejects, while the exact-hit audit rejects every stream. In the cancellation design, the contextual audit detects every stream with median alarm time 109; the pooled rejection proportion is only 0.006. Pooling is faster when both groups have the same directional error, with median alarm times 86 and 105 for pooled and contextual audits. Under weak cancellation, contextual power is only 0.328 by 1000 observations. The examples show both the value of using groups and the cost of dividing the evidence among them.

\subsection{A held-out count-response illustration}

The \texttt{DoctorVisits} dataset contains 5190 observations and is distributed with the AER package \citep{aer}. The response counts doctor visits during the preceding two weeks. A permutation with seed 314159 assigns 2595 observations to training and 2595 to the audit. This ordering creates a cross-sectional holdout stream; it does not follow the records chronologically.

At each of $\tau=0.5$ and $0.9$, we fit ordinary linear check-loss regression by linear programming, using an intercept, age, income, illness count, health score, and an indicator for female gender, in the data's documented units. We issue the nonnegative integer forecast
\[
q(x)=\max\{0,\lfloor x^\top\widehat\beta+1/2\rfloor\}.
\]
The audit evaluates this rounded forecast directly; rounding a fitted linear quantile does not itself guarantee quantile correctness. Before examining the held-out responses, we partition the audit into subjects with zero illness count and those with a positive count. We use the same finite stakes as in the auxiliary simulation and average the pooled and contextual evidence.

\begin{table}[ht]
\centering\small
\caption{Held-out doctor-visit results. Tail columns are empirical proportions. ``None'' means no threshold crossing in 2595 audit observations.}
\label{tab:real}
\begin{tabular}{cccccc}\toprule
$\tau$ & $Y<q$ & $Y=q$ & $Y>q$ & Quantile-set alarm & Exact-hit alarm\\\midrule
0.5 & 0.000 & 0.802 & 0.198 & None & 28\\
0.9 & 0.644 & 0.311 & 0.045 & None & 195\\
\bottomrule\end{tabular}
\end{table}

The fitted median forecast is zero for every subject, and $80.2\%$ of held-out responses equal that forecast. Exact-hit equality therefore gives a different assessment from quantile-set membership. At both quantile levels, the equality audit rejects while the quantile-set audit does not. For $\tau=0.9$, forecasts range from zero to three, and both pooled strict-tail frequencies lie below their null bounds. The same holds in the two illness groups. Nonrejection leaves open the possibility of departures in finer covariate groups.

We also apply the continuous-mixture audit using a depth-3 tree chosen from training predictors alone. The root separates zero from positive illness counts. The next level splits age at each training cell's median, and the final level does the same for income. Empty training cells use cutoff zero, and left children include values equal to the cutoff. Neither forecast is rejected during the 2595 held-out observations.

To examine sensitivity to a localized error, we increase the forecast by one visit in the zero-illness group and keep the tree and holdout order fixed. The tree alarms at observations 30 and 935 for $\tau=0.5$ and $0.9$. In contrast, the pooled strict-tail proportions remain within their null bounds: $(0.280,0.181)$ for the median and $(0.838,0.037)$ for the 0.9-quantile. The pooled continuous-mixture audit does not alarm. This deliberately introduced departure shows how subgroup evidence can reveal an error hidden by pooling; it does not establish an error in the original fitted forecasts.

The statistical interpretation depends on the sampling assumptions. These historical survey records do not provide an independently randomized sequential trial or a design-based survey guarantee. Nor can a nonrejection establish conditional correctness at every covariate value. The audit concerns the issued forecast and the information used to assess it, rather than causal effects of illness, income, or gender. We report each quantile separately at level $0.05$; joint familywise testing would require an adjustment or an across-quantile mixture.

\section{Discussion}

Auditing discrete quantile forecasts starts with choosing the right target. The strict-tail inequalities retain the usual quantile definition at point masses, while the information projection describes the evidence available from residual signs. The tree audit then balances this conditional signal against the costs of local mixtures and partition refinement. Under the assumptions of Theorem~\ref{thm:universal}, refining-tree mixtures attain the largest almost-sure exponential rate available to valid sign-based evidence sequences.

That asymptotic result leaves several practical questions open. Our implementation fixes the tree depth and tabulates mixtures to a finite horizon. Its power depends on the chosen splits, and an error that is invisible within those cells can remain undetected. Fine partitions also become expensive with many covariates. The current experiments do not establish adaptation to sparse high-dimensional structure or optimal finite-sample alarm times. Measurement intervals and approximate ties require further modeling: treating two nearby values as equal changes the hypothesis being tested.

Regression inversion offers another direction for development. The coverage statement assumes conditional quantile correctness, and computation must retain the exact-fit faces where tie factors change. A certified inversion procedure remains to be developed. Other useful extensions include growing trees with finite-horizon power guarantees, drifting alternatives, and broader comparisons with feature-learning methods under a common null. The present experiments suggest that those comparisons should report detection times as well as rejection proportions: adapting to an unknown partition can help, but it has a measurable cost.

\appendix

\section{Additional mathematical checks}

\paragraph{Allowance for a known tolerance.}

Suppose instead that $\Pp(Z_t=d\mid\G_t)\leq p_d+\delta_t$ for a known, predictable $\delta_t\geq0$. Then
\[
\frac{1+\lambda_t(\ind\{Z_t=d\}/p_d-1)}
{1+\lambda_t\delta_t/p_d}
\]
has conditional expectation at most one and is therefore valid. Here $\delta_t$ must be a known bound available before the response. If it is estimated, uncertainty in that estimate needs to be included in the construction.

\paragraph{Examples of valid factors.}

For $\tau=1/2$, the vector $(1.2,0.2,0.8)$ has null support function one and is valid. The factor $(1.2,0.8,0.8)=e^L_{0.2}$ dominates it. The degenerate null concentrated at the forecast also forces $e_T\leq1$, so a large payoff on ties cannot produce unrestricted evidence.

\paragraph{Numerical verification.}

We compared the closed-form information value with constrained numerical maximization of $\sum_zp_z\log e_z$ for 250 independently generated $(\tau,p)$ combinations. All runs satisfied the verification tolerance $10^{-6}$ under the four vertex constraints. A separate implementation using an analytic objective gradient on 1000 additional combinations gave maximum information-value discrepancy $5.1\times10^{-13}$ and maximum constraint violation $8.9\times10^{-16}$. The reproducibility files include the trial results, seeds, and data checksum.

An incomplete-beta identity provided a separate check of the mixture recurrence, with maximum absolute log discrepancy $8.6\times10^{-14}$. Enumeration of all 26 depth-3 prunings on 450 streams at $\tau\in\{0.2,0.5,0.8\}$ agreed with the recursion to $3.6\times10^{-15}$ in log capital, and each stream satisfied the oracle bound. We also checked the resolution-loss inequality in 500 randomly generated positive-probability configurations. These computations check the formulas and their implementation alongside the analytical arguments.

\section{Reproducibility and literature audit}

The numerical supplement contains the analysis scripts, pinned dependency versions, and saved results needed to reproduce the tables. It includes binomial uncertainty intervals, fitted coefficients, subgroup sign frequencies, evidence-path plots, and the public AER/Rdatasets data with provenance and a SHA-256 checksum. The auxiliary script downloads the data only if the included copy is absent.

The primary tree and auxiliary finite-grid experiments have separate seeds and output files. The matched-null comparison reconstructs the primary streams and records individual alarm times, paired rejection differences, and restricted mean alarm times. Separate files record the held-out tree analysis and the forecast perturbation.

The literature review covered discrete and mixed-outcome quantile inference, quantile regression under shift, sequential quantile inference, and e-backtesting and constraint-based e-values. In comparing the closest sources, we used the atom qualification in \citet{antonov}, the quantile example in \citet{larsson}, and the one-sided construction in \citet{wang}. These comparisons motivate the attribution of the general complete-class and sequential-validity arguments to existing theory. A separate literature-audit note records the scope of the review and the proposed contribution. Some recent sources are preprints, so the review should be understood as a scoped comparison rather than an exhaustive priority claim.

\section*{Declarations}

\paragraph{Funding.}
The author reports no funding for this work.

\paragraph{Competing interests.}
The author declares no conflicts of interest.

\paragraph{Data and code availability.}
The DoctorVisits data are publicly available at the URL in \citet{aer}. The accompanying reproducibility archive contains the analysis code, numerical results, dependency versions, and data checksum.

\paragraph{Acknowledgment of AI assistance.}
The author used AI tools to assist with LaTeX preparation and numerical verification. The research idea originated with the author, who developed the work and takes responsibility for the manuscript’s content and results.


\begin{thebibliography}{99}

\bibitem[AER documentation(n.d.)]{aer}
AER package documentation (n.d.).
DoctorVisits: Australian Health Service Utilization Data.
\href{https://vincentarelbundock.github.io/Rdatasets/doc/AER/DoctorVisits.html}
{Documentation and provenance}, accessed October 11, 2026.

\bibitem[Antonov et al.(2026)]{antonov}
Antonov, I., Mukherjee, S., Pibernik, R., and Choe, Y. J. (2026).
Bet on Features: Anytime-Valid and Feature-Aware Auditing of Conditional Quantile Forecasters.
Preprint,
\href{https://arxiv.org/abs/2607.11653}{arXiv:2607.11653}.

\bibitem[Casgrain et al.(2024)]{casgrain}
Casgrain, P., Larsson, M., and Ziegel, J. (2024).
Sequential testing for elicitable functionals via supermartingales.
\emph{Bernoulli}, 30(2), 1347--1374.
\href{https://doi.org/10.3150/23-BEJ1634}
{doi:10.3150/23-BEJ1634}.

\bibitem[Chernozhukov et al.(2020)]{chernozhukov}
Chernozhukov, V., Fern\'andez-Val, I., Melly, B., and W\"uthrich, K. (2020).
Generic inference on quantile and quantile effect functions for discrete outcomes.
\emph{Journal of the American Statistical Association}, 115(529), 123--137.
\href{https://arxiv.org/abs/1608.05142}
{Preprint: arXiv:1608.05142}.

\bibitem[Clerico(2026)]{clerico}
Clerico, E. (2026).
Sequential testing of conditionally constrained hypotheses.
Preprint,
\href{https://arxiv.org/abs/2606.06769}{arXiv:2606.06769}.

\bibitem[Cover(1991)]{cover}
Cover, T. M. (1991).
Universal portfolios.
\emph{Mathematical Finance}, 1(1), 1--29.
\href{https://doi.org/10.1111/j.1467-9965.1991.tb00002.x}
{doi:10.1111/j.1467-9965.1991.tb00002.x}.

\bibitem[Cover and Ordentlich(1996)]{coverord}
Cover, T. M. and Ordentlich, E. (1996).
Universal portfolios with side information.
\emph{IEEE Transactions on Information Theory}, 42(2), 348--363.
\href{https://isl.stanford.edu/~cover/papers/cover_ordentlich_96.pdf}
{Author-hosted article}.

\bibitem[Howard and Ramdas(2022)]{howard}
Howard, S. R. and Ramdas, A. (2022).
Sequential estimation of quantiles with applications to A/B testing and best-arm identification.
\emph{Bernoulli}, 28(3), 1704--1728.
\href{https://doi.org/10.3150/21-BEJ1388}
{doi:10.3150/21-BEJ1388}.

\bibitem[Koenker and Bassett(1978)]{koenker}
Koenker, R. and Bassett, G., Jr. (1978).
Regression quantiles.
\emph{Econometrica}, 46(1), 33--50.
\href{https://doi.org/10.2307/1913643}
{doi:10.2307/1913643}.

\bibitem[Larsson et al.(2026)]{larsson}
Larsson, M., Ramdas, A., and Ruf, J. (2026).
Testing hypotheses generated by constraints.
\emph{Mathematics of Operations Research}, published online.
\href{https://doi.org/10.1287/moor.2025.1129}
{doi:10.1287/moor.2025.1129}.

\bibitem[Orabona and Jun(2024)]{orabona}
Orabona, F. and Jun, K.-S. (2024).
Tight concentrations and confidence sequences from the regret of universal portfolio.
\emph{IEEE Transactions on Information Theory}, 70(1), 436--455.
\href{https://doi.org/10.1109/TIT.2023.3330187}
{doi:10.1109/TIT.2023.3330187};
\href{https://arxiv.org/abs/2110.14099}{arXiv:2110.14099}.

\bibitem[Ramdas et al.(2022)]{ramdasexchange}
Ramdas, A., Ruf, J., Larsson, M., and Koolen, W. M. (2022).
Testing exchangeability: Fork-convexity, supermartingales and e-processes.
\emph{International Journal of Approximate Reasoning}, 141, 83--109.
\href{https://doi.org/10.1016/j.ijar.2021.06.017}
{doi:10.1016/j.ijar.2021.06.017}.

\bibitem[Wang et al.(2026)]{wang}
Wang, Q., Wang, R., and Ziegel, J. (2026).
E-backtesting.
\emph{Management Science}, 72(6), 4952--4973.
\href{https://doi.org/10.1287/mnsc.2023.01659}
{doi:10.1287/mnsc.2023.01659};
\href{https://arxiv.org/abs/2209.00991}{author manuscript}.

\bibitem[Waudby-Smith and Ramdas(2024)]{waudby}
Waudby-Smith, I. and Ramdas, A. (2024).
Estimating means of bounded random variables by betting.
\emph{Journal of the Royal Statistical Society Series B}, 86(1), 1--27.
\href{https://doi.org/10.1093/jrsssb/qkad009}
{doi:10.1093/jrsssb/qkad009}.

\bibitem[Willems et al.(1995)]{willems}
Willems, F. M. J., Shtarkov, Y. M., and Tjalkens, T. J. (1995).
The context-tree weighting method: Basic properties.
\emph{IEEE Transactions on Information Theory}, 41(3), 653--664.
\href{https://doi.org/10.1109/18.382012}
{doi:10.1109/18.382012}.

\end{thebibliography}
\end{document}